\let\ssection=\section
\newcommand{\p}{\noindent} 
\newtheorem{theo}{Theorem}[section]
\newtheorem*{theo*}{Theorem}
\newtheorem{lemm}[theo]{Lemma}
\newtheorem{defi}[theo]{Definition}
\newtheorem{exam}[theo]{Example}
\newtheorem{prop}[theo]{Proposition}
\newtheorem*{rema}{Remark}
\let\ssection=\section
\renewcommand{\section}{\setcounter{equation}{0}\ssection}
\newtheorem*{namedtheorem}{\theoremname}
\newcommand{\theoremname}{Theorem}
\newenvironment{named}[1]{\renewcommand{\theoremname}{#1}\begin{namedtheorem}}{\end{namedtheorem}}
\DeclareMathOperator{\PSL}{PSL}
\DeclareMathOperator{\vol}{vol}
\DeclareMathOperator{\supp}{Supp}
\DeclareMathOperator{\cotan}{cotan}
 \newcommand{\BH}{\mathbb H}
\newcommand{\BR}{\mathbb R} 
\newcommand{\BN}{\mathbb N} 
 \newcommand{\BZ}{\mathbb Z}
\newcommand{\BP}{\mathbb P}
 \newcommand{\CB}{\mathcal B}
\newcommand{\CC}{\mathcal C}
 \newcommand{\CL}{\mathcal L}
\newcommand{\CM}{\mathcal M}
 \newcommand{\CT}{\mathcal T}
\newcommand{\FC}{\mathfrak C}
\newcommand*{\Lfaktor}[2]{% \leftfaktor{#1}{#2} -> #2\#1
	\raisebox{-0.5\height}{\ensuremath{#2}}% Denominator
	\mkern-5mu\diagdown\mkern-4mu%BackSlash /
	\raisebox{0.5\height}{\ensuremath{#1}}% Numerator
}
\newcommand*{\Curve}[2]{\gamma^{({#1})}_{#2}}
\newcommand*{\hatCurve}[2]{\hat{\gamma}^{({#1})}_{#2}}
\begin{document}
	
	\title[Thurston's compactification via geodesic currents]{Thurston's compactification via geodesic currents: the case of non-compact finite area surfaces }
	\author{Marie  Trin}
	\address{Univ Rennes , CNRS, IRMAR - UMR 6625, F-35000 Rennes, France}
	\email{Marie.Trin@univ-rennes.fr}
	
	\keywords{Teichm\"{u}ller space, Thurston's compactification, geodesic currents, sequences of random geodesics}
	\thanks{The author is supported by Centre Henri Lebesgue, program ANR-11-LABX-0020-0 and R\'{e}gion Bretagne}
	\date{\today}
	
	\begin{abstract} In 1988, Bonahon gave a construction of Thurston's compactification of Teichm\"{u}ller space using geodesic currents. His argument only applies in the case of closed surfaces, and there are good reasons for that. We present a variant which applies to surfaces of finite area and to do so we prove a control theorem for sequences of random geodesics. Note that this theorem may be of independant interest, especially when the surface is non-compact.
	\end{abstract}
	
	\maketitle
	
	\section{Introduction}
	
	The Teichm\"{u}ller space $\CT(S)$ of a surface $S$ of finite topological type, with no boundary and of negative Euler characteristic $\chi(S)$ is the space of isotopy classes of (complete and finite volume) Riemannian metrics on $S$ of constant curvature $-1$. Teichm\"{u}ller space is not compact but Thurston showed in \cite{Thu} how it can be compactified by the space $\BP_+\CM\CL(S)$ of projective measured laminations on $S$. The starting point of Thurston's compactification is the embedding of $\CT(S)$ into the space $\BP_+(\BR_+^{\FC(S)})=\Lfaktor{\BR_+^{\FC(S)}}{\BR_+}$:
	\begin{center}
		$\begin{array}{ccccccccccc}
		\ell & : & \mathcal{T}(S) & \to     & \BP_+(\BR_+^{\FC(S)}) \\
		&   & X              & \mapsto & \BR_+\ell_X(\cdot);      \\
		\end{array}$
	\end{center}
	Here $\ell_X$ is the length function associated to the hyperbolic structure $X$ on $S$ and $\FC(S)$ is the set of free homotopy classes of essential closed curves of $S$. What Thurston did is to prove that the image of $\ell$ is locally compact and to identify the boundary of $\CT(S)$ in $\BP_+(\BR_+^{\FC(S)})$ with $\BP_+\CM\CL(S)$.
	
	\begin{theo*}[Thurston's compactification]
		If $S$ is a finite analytic type surface with negative Euler characteristic, then the accumulation points of $\CT(S)$ in $\BP_+(\BR_+^{\FC(S)})$ are the projective classes of functions $\gamma\mapsto i(\lambda,\gamma)$ where $\lambda\in\CM\CL(S)$ is a measured lamination on $S$.
	\end{theo*}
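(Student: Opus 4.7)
The plan is to adapt Bonahon's proof of this theorem in the closed case, which proceeds by embedding $\CT(S)$ into the space $\CC(S)$ of geodesic currents. Recall that for a closed surface one associates to each $X \in \CT(S)$ its Liouville current $L_X \in \CC(S)$, characterized by the identity $i(L_X, \gamma) = \ell_X(\gamma)$ for every $\gamma \in \FC(S)$. Given a sequence $X_n \to \infty$ in $\CT(S)$, one normalizes $L_{X_n}$ to land in a compact subset of $\CC(S)$, extracts a weak-$*$ subsequential limit $\mu$, verifies $i(\mu, \mu) = 0$ from continuity of the intersection form, concludes $\mu \in \CM\CL(S)$, and reads off convergence in $\BP_+(\BR_+^{\FC(S)})$ from the continuity of $\gamma \mapsto i(\mu, \gamma)$.

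This scheme fails in the finite area non-compact case because $L_X$ is not locally finite near the cusps, so it is not a geodesic current. The substitute is to represent the length function by counting currents supported on random long closed geodesics. For $L > 0$ large, sample a random closed $X$-geodesic $\gamma_L$ of $X$-length comparable to $L$ and form the current $\frac{1}{L}\hat\gamma_L \in \CC(S)$, where $\hat\gamma_L$ denotes the atomic current associated with the geodesic. For closed surfaces the $\frac{1}{L}\hat\gamma_L$ converge in probability to a fixed multiple of $L_X$ as $L \to \infty$. The control theorem for sequences of random geodesics announced in the abstract should provide, in the cusped setting, the following statistical replacement: uniformly on compact sets of $\CT(S)$, the currents $\frac{1}{L}\hat\gamma_L$ remain in a compact subset of $\CC(S)$ and their intersection with any fixed $\gamma \in \FC(S)$ is comparable to $\ell_X(\gamma)/L$ up to a controlled error.

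With this tool in hand, the proof of the direct inclusion runs as follows. Take $X_n$ accumulating at a boundary point of $\CT(S)$ in $\BP_+(\BR_+^{\FC(S)})$. For each $n$, draw a random closed geodesic $\gamma_n$ on $X_n$ of length $L_n$, chosen so that, after dividing by $L_n$, the intersection with a fixed reference curve $\gamma_0$ stays bounded away from $0$ and $\infty$. By Bonahon's compactness criterion the currents $\mu_n := \frac{1}{L_n}\hat\gamma_n$ then live in a compact subset of $\CC(S)$; extract a subsequential limit $\mu \in \CC(S)$. The control theorem bounds the self-intersection $i(\mu_n, \mu_n)$, forcing $i(\mu, \mu) = 0$ in the limit, so $\mu \in \CM\CL(S)$. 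Continuity of $i(\cdot, \gamma)$ then gives $\frac{1}{L_n} \ell_{X_n}(\gamma) \to i(\mu, \gamma)$ for every $\gamma \in \FC(S)$, which is the desired projective convergence. The reverse inclusion, that every class $[i(\lambda, \cdot)]$ is actually attained, is then standard: one may apply a stretching path along $\lambda$ or follow Bonahon's approximation of $\lambda$ by simple closed geodesics.

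The main obstacle is the control theorem itself. On a cusped surface a long random geodesic may spend a significant fraction of its $X$-length inside cusp excursions, where it picks up large intersection numbers with curves bounding the cusps; a priori this could blow up the mass of $\mu_n$ or concentrate the limit $\mu$ on peripheral geodesics rather than on a lamination. One must quantify these excursions and show that, after normalization, the excess mass contributed by the cusps is negligible in $\CC(S)$, so that the statistical substitute for $L_X$ genuinely reproduces the features of $L_X$ that drive Bonahon's argument. This quantitative control of random geodesics in cusp neighborhoods is the new ingredient; once it is available, the remaining steps are faithful translations of the closed-surface proof.
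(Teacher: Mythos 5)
Your plan captures the right spirit --- replace the Liouville current by counting currents of carefully chosen random geodesics and control their cusp excursions --- but it runs the Bonahon argument entirely inside $\CC(S)$ for the non-compact surface $S$, and that is exactly the framework the paper shows does not work. Two things you rely on fail there. First, your diagnosis that $L_X$ ``is not locally finite near the cusps, so it is not a geodesic current'' is not accurate: the Liouville measure on the space of geodesics of $\tilde{X}\cong\BH^2$ is a Radon, $\pi_1$-invariant measure whatever the cusps do, so $L_X$ is a perfectly good current on $S$. The actual obstruction is that the intersection form does \emph{not} extend continuously to $\CC(S)\times\CC(S)$ when $S$ is non-compact (this is the content of Example~\ref{NCI} and of \cite{DS}): a sequence of closed geodesics can spiral into the cusps, converge weakly to a cusp-to-cusp geodesic of zero self-intersection, and yet have self-intersection numbers going to infinity. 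Second, and for the same reason, Bonahon's compactness criterion for sets of currents with bounded intersection against a filling curve is a statement about $\CC$ of a \emph{compact} surface; it does not give you a compact subset of $\CC(S)$ in which to extract your limit $\mu$, nor the continuity ``$i(\mu_n,\mu_n)\to i(\mu,\mu)$'' and ``$i(\mu_n,\gamma)\to i(\mu,\gamma)$'' that your sketch uses twice.

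The missing structural idea is the change of ambient space: fix a compact hyperbolic surface with geodesic boundary $\Sigma$ with $S\cong\Sigma\setminus\partial\Sigma$, transport all closed curves of $S$ to $\Sigma$ via the identification $\FC(S)=\FC(\Sigma)$, and view the normalized random geodesics as elements of $\CC(\Sigma)$. There, $\BP_+\CC(\Sigma)$ \emph{is} compact and $i$ \emph{is} continuous, so your extraction and the conclusion $i(\mu,\mu)=0$ go through. This manoeuvre creates two new problems that the control theorem is designed to handle, and that your sketch gestures at but does not resolve: (a) the Liouville current of $X$ is \emph{not} a current on $\Sigma$ (Proposition~\ref{NLC}), which is the real reason you must work with random geodesics and not just a cosmetic one; and (b) after passing to $\CC(\Sigma)$ the weak limit $\mu$ might put mass on $\partial\Sigma$, which is precisely your worry about ``peripheral geodesics''. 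The paper excludes this by building the random geodesics with \emph{uniform} bounds on their peripheral self-intersection numbers (condition (3) of Theorem~\ref{Lemma} plus Lemma~\ref{ipbound}), which forces them to live in a fixed compact core and hence forces the limits $\mu_n$ to be supported away from $\partial\Sigma$, i.e.\ in $\CM\CL(S)$. Finally, your one-index extraction silently interchanges a limit in $n$ (random geodesic approximates $\ell_{X_n}$) with a limit in $n$ (degeneration in $\CT(S)$); the paper separates these into a double array and proves, via the $X$-independent convergence rates in conditions (1)--(2) of Theorem~\ref{Lemma}, that the interchange is legitimate. Without that uniformity in $X$ the diagonal you propose is not obviously well-defined, since the $L_n$ needed for the random geodesic approximation to be good for a given curve $\gamma$ would a priori blow up as $X_n$ degenerates.
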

	Thurston's original proof is explained in \cite{ast}. Some versions using real-trees are given by Morgan–Shalen \cite{MS}, Bestvina \cite{Bestvina} or Paulin \cite{Paulin2}. An overview of the different compactification methods is availlable in \cite{Paulin} or \cite{Ohshika}. A compactification for the set of flat-structures and using geodesic currents is done in \cite{DLR}, note that this article is interested in both compact and non-compact surfaces.  Here, we will be mostly interested in a very elegant argument, for closed surfaces, due to Bonahon \cite{Bon88}. Let's sketch the proof. Recall that geodesic currents are $\pi_1(S)$-invariant Radon measures on the set of bi-infinite geodesics of the universal cover of $S$. Bonahon embeds $\CT(S)$ into the space $\CC(S)$ of geodesic currents of $S$, sending each element $X\in\CT(S)$ of the Teichm\"{u}ller space to the associated Liouville current $L_X\in\CC(S)$. The Liouville current satisfies two important properties:
	\begin{align}
	& i(L_X,\gamma)=\ell_X(\gamma) \quad \text{for every essential closed curve } \gamma\text{, and} \label{Prop1} \\
	& i(L_X,L_X)=\pi^2|\chi(S)|.\label{Prop2}
	\end{align}
	Here,  $i : \CC(S)\times\CC(S)\to\BR_ +$ is the intersection form, a continuous bilinear map extending the usual geometric intersection number between curves. Compactness of $S$ implies compactness of the space $\BP_+\CC(S)$ of projective currents. It follows that each sequence $(X_n)_{n\in\BN}$ in Teichm\"{u}ller space admits a subsequence, say the whole sequence, which projectively converges to a non-zero current $\mu$, meaning that there are positive reals $\varepsilon_n$ such that $\lim\limits_{n\to\infty}\varepsilon_nL_{X_n}=\mu$. The continuity of $i$ and property (\ref{Prop1}) ensure that the length functions $\ell_{X_n}(\cdot)$ converge projectively to $i(\mu,\cdot)$. Moreover, $\varepsilon_n$ tends to zero unless $X_n$ converges in $\CT(S)$. Knowing that $\varepsilon_n\xrightarrow[n\to\infty]{}0$, property (\ref{Prop2}) ensures that $i(\mu,\mu)=0$, meaning that $\mu$ is a measured lamination, as we needed to prove.

	We stress that Bonahon's argument, with all its simplicity, only applies to closed surfaces. We will come back later to this specificity and to the obstructions to a direct extension of his argument. Recently, Bonahon and \v{S}ari\'{c} have given another proof of this theorem using geodesic currents. The arguments in \cite{Bon21} are geared to infinite type surfaces, it is worth noticing that working in such a general context implies the lost of the simplicity of Bonahon's original proof.
	
	Our goal here is to adapt Bonahon's original argument to be able to deal with non-compact surfaces of finite analytic type .

	Let's look at the difficulties that prevent the extension of Bonahon's proof to the non-compact case. The intersection form, especially its continuity, is the linchpin of Bonahon's original proof. However, continuity fails when the surface is not compact, even if it has finite area (see \cite{DS} or Example \ref{NCI} below). We will therefore change our point of view to allow us to benefit from the continuity of $i$. We will consider currents on $\Sigma$ instead of $S$, where $\Sigma$ is a compact hyperbolic surface with geodesic boundary whose interior is homeomorphic to $S$, that is $S=\Sigma\setminus\partial\Sigma$. The second key ingredient of Bonahon's proof is the existence of the Liouville current but, as we will see, when working with currents on $\Sigma$ we lose the Liouville current.
	
	\begin{named}{Proposition~\ref{NLC}} Let $\Sigma$ be a compact hyperbolic surface with non-empty boundary and $X$ a hyperbolic structure on $S=\Sigma\setminus \partial\Sigma$. There is no current $L_X$ on $\Sigma$ which satisfies $i(L_X,\gamma)=\ell_X(\gamma)$ for every essential closed curve $\gamma\in\FC(\Sigma)$.
	\end{named}
	In order to recover a version of properties (\ref{Prop1}) and (\ref{Prop2}), we will, for every hyperbolic structure $X$ on $S$,  replace the Liouville current $L_X$ by specific sequences of random geodesics $(\gamma^{(X)}_n)_{n\in\BN}$, that is sequences of essential closed geodesics whose associated probability measures in $T^1X$ converge to the Liouville measure with respect to the weak-$*$ topology. They will be chosen to satisfy (\ref{Prop1}) and (\ref{Prop2}) asymptotically, that is:
	
	\begin{align}
	&\lim\limits_{n\to\infty} i\left(\frac{\gamma_n}{\ell_X(\gamma_n)},\gamma\right)=\frac{\ell_X(\gamma)}{\pi^2|\chi(S)|} \quad \text{for all essential closed curve }\gamma,\label{Prop1Bis}
	\end{align}
	
	\begin{align}
	&\lim\limits_{n\to\infty} i\left(\frac{\gamma_n}{\ell_X(\gamma_n)},\frac{\gamma_n}{\ell_X(\gamma_n)}\right)=\frac{1}{\pi^2|\chi(S)|}. \label{Prop2Bis}
	\end{align}
	As discussed in \cite{ES}, any sequence of random geodesics $(\gamma_n)_{n\in\BN}$ satisfies (\ref{Prop1Bis}). Moreover, if the surface is compact then (\ref{Prop2Bis}) is ensured for every sequence of random geodesics. However, for a non-compact surface,  arbitrary sequences of random geodesics do not necessarily satisfy (\ref{Prop2Bis}), see Example \ref{DivSelfInt} below. Indeed, a large part of this article will be dedicated to building sequences of random geodesics satisfying this property for non-compact surfaces. 
	
	\begin{theo}\label{Resultat}
		For every  complete and finite area hyperbolic structure $X$ on a finite analytic type surface $S$ of negative Euler characteristic $\chi(S)$, there is a sequence $(\Curve{X}{N})_{n\in\BN}$ of random geodesics such that:
		\begin{align*}
		\lim\limits_{n\to\infty} i\left(\frac{\Curve{X}{N}}{\ell_X(\Curve{X}{N})},\frac{\Curve{X}{N}}{\ell_X(\Curve{X}{N})}\right)=\frac{1}{\pi^2|\chi(S)|}.
		\end{align*}
	\end{theo}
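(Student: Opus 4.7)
\emph{Proof proposal.} The plan is to construct $\gamma_n^{(X)}$ as closed geodesics that equidistribute in $T^1X$ with respect to the Liouville measure while having controllably shallow cusp excursions, and then to compute the asymptotic self-intersection count by a decomposition into a bulk thick-part contribution and negligible cusp contributions.

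First, fix an exhaustion of $X$ by compact subsurfaces $X^{(n)}$ obtained by cutting off the cusps at horocycles of depth $T_n \to \infty$; since truncating cusps does not change the Euler characteristic, $|\chi(X^{(n)})|=|\chi(S)|$ for all $n$. Using equidistribution of long closed geodesics on $T^1X$ (via mixing of the geodesic flow, in the Bowen--Margulis style) together with a logarithm-law-type non-escape estimate for the geodesic flow on noncompact hyperbolic surfaces, choose for each $n$ a closed geodesic $\gamma_n^{(X)}$ whose length $L_n:=\ell_X(\gamma_n^{(X)})$ is large, whose associated probability measure on $T^1X$ is $o(1)$-close to the normalised Liouville measure, and all of whose maximal cusp excursions have depth at most $T_n$. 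A diagonal choice of parameters ensures that $(\gamma_n^{(X)})$ is then a sequence of random geodesics in the sense of the introduction.

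To evaluate $i(\gamma_n^{(X)},\gamma_n^{(X)})$, cut $\gamma_n^{(X)}$ along $\partial X^{(n)}$ into thick-part arcs contained in $X^{(n)}$ and cusp-excursion arcs. Every transverse self-intersection corresponds to a pair of such arcs, yielding $i(\gamma_n^{(X)},\gamma_n^{(X)})=I_n^{\mathrm{tt}}+I_n^{\mathrm{tc}}+I_n^{\mathrm{cc}}$ according to whether the two arcs are both thick, one thick and one cuspidal, or both cuspidal. Horocyclic convexity in the universal cover implies that any two geodesic arcs whose endpoints lie on a common horocycle of a fixed cusp cross at most once, so, combined with the bound $T_n$ on the excursion depth and the fact that the number of excursions is $O(L_n)$, one obtains $I_n^{\mathrm{tc}}+I_n^{\mathrm{cc}}=o(L_n^2)$. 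For the thick-thick contribution $I_n^{\mathrm{tt}}$, all relevant arcs are supported in the compact subsurface $X^{(n)}$ where the intersection form on $\CC(\Sigma)$ is continuous when restricted to currents supported there; the collection of thick arcs, suitably renormalised, converges to the (normalised) Liouville current of $X^{(n)}$, and an adaptation of Bonahon's compact-surface computation yields $I_n^{\mathrm{tt}}\sim (L_n^{\mathrm{thick}})^2/(\pi^2|\chi(S)|)$, where $L_n^{\mathrm{thick}}$ is the total $X$-length of thick arcs. Since the Liouville mass of the shrinking cusp neighbourhoods tends to zero, $L_n^{\mathrm{thick}}/L_n\to 1$, and the stated limit follows.

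The main obstacle is the joint construction in the first step: producing closed geodesics that equidistribute in $T^1X$ and simultaneously have uniformly controlled excursions. Without such control, arbitrarily deep excursions can contribute large length while producing few additional self-intersections, which is precisely the failure illustrated by the example referenced above, and one recovers only an inequality in the limit. The key technical input is therefore a quantitative coupling of Bowen-type equidistribution with effective non-escape (\emph{logarithm law}) estimates for the cuspidal ends; once such geodesics are produced, the remainder is the routine thick/cusp splitting combined with a Bonahon-type integral-geometric computation on the compact piece $X^{(n)}$.
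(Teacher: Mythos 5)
Your proposal shares one idea with the paper (cut the self-intersection count into a thick-part contribution and cusp contributions, and balance the truncation depth against the equidistribution rate via a diagonal argument), but the route to get there is different and, as written, has two genuine gaps.

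The first and most serious gap is the claim that ``any two geodesic arcs whose endpoints lie on a common horocycle of a fixed cusp cross at most once.'' This is true for a single pair of lifts in the universal cover, but not on the surface: an excursion of depth $T$ into a cusp wraps around the cusp on the order of $T$ times, so two deep excursions of a closed geodesic can cross each other roughly $\min(T_1,T_2)$ times. (The paper quantifies precisely this: an excursion of length $\ell$ into $\CB^k_i$ has about $ke^{\ell/2}$ self-intersections, and two excursions entering the horoball of depth $k$ at angle $\geq\theta$ cross at most about $4k/\theta$ times.) With the correct count, and with $T_n\to\infty$ and $O(L_n)$ excursions, your bound $I_n^{\mathrm{tc}}+I_n^{\mathrm{cc}}=o(L_n^2)$ does not follow; one needs to choose the truncation depth and the re-entry angle in a coupled way (the paper uses $k_p=e^{p/2}$, $\theta_p=e^{-p/2}$) so that the cuspidal contribution is $O(L_n^2/p^2)$, and this balance is not visible in your argument.

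The second gap is the first step of your construction: you need closed geodesics that \emph{simultaneously} equidistribute towards Liouville measure and have all excursion depths bounded by a prescribed $T_n$. You acknowledge this as the main obstacle and point to ``a quantitative coupling of Bowen-type equidistribution with logarithm-law estimates,'' but no such statement is proved or cited, and logarithm-law estimates apply to Liouville-generic orbits, not directly to closed geodesics. The paper avoids this difficulty entirely by taking the opposite route: start from an \emph{arbitrary} sequence of random geodesics, explicitly modify each geodesic by a cutting process (truncating excursions deeper than $k$ and re-entering at angle between $\theta$ and $2\theta$, then pulling tight), and prove (Lemma~\ref{CompLength} and Lemma~\ref{RandomGeod}) that the resulting geodesics are still random, have comparable length, and have controlled excursion depth. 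This is more elementary and bypasses any input about the statistics of closed geodesics. A minor further issue: you invoke ``the Liouville current of $X^{(n)}$,'' but $X^{(n)}$ has (horocyclic) boundary, and Proposition~\ref{NLC} shows that the relevant Liouville-type current does not exist on a compact surface with boundary; the paper instead uses the uniform compact-set version of the length estimate, namely \eqref{compact}, applied to a growing compact core via a diagonal extraction.
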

	\noindent  Theorem~\ref{Resultat} is actually part of a more technical result, Theorem~\ref{Lemma}, that we will prove in section 3. The main additional content of Theorem~\ref{Lemma} is to ensure that the convergence rates in (\ref{Prop1Bis}) and (\ref{Prop2Bis}) hold with no dependance on the structure $X$. This uniformity will be important to achieve the proof of Thurston's compactification in section 4. Moreover, the proof of the theorem also ensures that we can control the behavior of sequences of random geodesics into some cusp's neighborhoods.

	\subsection*{Acknowledgements} 
	I am grateful to Juan Souto for our discussions and for all his suggestions about this paper. I would like to thank people who gave me their feedbacks on the first version of this document, especially Francis Bonahon, Frédéric Paulin, Beatrice Pozzetti, and Dylan Thurston for their remarks and Didac Martinez-Granado and Arya Vadnere for their questions.
	I also want to thank the PhD students of the IRMAR in ergodic theory for our debates on hyperbolic geometry, Barbara Schapira for her help with computations and  Jing Tao for the time she gave me, her advices and her comments on my work. Finally, I thank the anonymous referee for their careful review of this document.
	
	\section{Preliminaries}
	
	In this section, we give some technical results and definitions. We refer the reder to \cite{Bon88}, \cite{Bon86} and \cite{ES} for details. From now on,  let $S$ be a non-compact surface of finite analytic type, with negative Euler characteristic $\chi=\chi(S)<0$. We denote by $X,X',X_n...$ points in the Teichm\"{u}ller space of $S$, or maybe just the underlying complete and finite area hyperbolic structure. Note that, although not specified, all the hyperbolic structures are complete and finite area. Moreover, we will write $Z$ to refer indifferently to any finite area hyperbolic surface, possibly with punctures or with geodesic boundaries.
	If $S$ is endowed with a hyperbolic structure $X$ then every free homotopy class of essential closed curves contains a unique geodesic representative, so we identify a class with its geodesic representative when the hyperbolic structure is fixed. We will denote by $\FC(S)$ the set of free homotopy classes of essential closed curves -by essential we mean non-null-homotopic and non-peripheral- or equivalently the set of essential closed geodesics. Let also $\Sigma$ be a compact hyperbolic surface with geodesic boundary whose interior is homeomorphic to $S$. We fix a homeomorphism between $S$ and $\Sigma\setminus\partial\Sigma$. This homeomorphism immediately induces a correspondance between the essential closed curves of $S$ and the ones of $\Sigma$, that is 
	\begin{equation} \label{CorresCurves}
	\FC(S)=\FC(\Sigma).
	\end{equation}
	\p 
	The homeomorphism $S=\Sigma\setminus\partial\Sigma$ also gives an identification between measured laminations of $S$ and the ones of $\Sigma$ supported by $\Sigma\setminus\partial\Sigma$:
	\begin{equation}\label{CorresLam}
	\CM\CL(S)=\{\lambda\in\CM\CL(\Sigma)|\lambda \text{ supported by } \Sigma\setminus\partial\Sigma\}.
	\end{equation}
	The identifications (\ref{CorresCurves}) and (\ref{CorresLam}) will allow us to work on $\Sigma$ rather than on $S$.

	\subsection{Currents on surfaces}
	We recall now a few properties of currents that we will need in the following. A \textit{geodesic current} on $Z$ is a $\pi_1(Z)$-invariant Radon measure on the set of bi-infinite geodesics on the universal cover $\Tilde{Z}$ of $Z$ (even if the surface has non-empty boundary). The space $\CC(Z)$ of geodesic currents on $Z$ was introduced by Bonahon in \cite{Bon86} and is endowed with the weak-$*$ topology. For more information on currents we refer to \cite{Bon86}, \cite{Bon88}, \cite{AL} and, \cite[Chap. 3]{ES}.
	
	The currents we will be mainly interested in are weighted multicurves and measured laminations and we will always consider currents on the compact surface $\Sigma$. An advantage of doing so is that when $Z$ is compact, the topological space $\CC(Z)$ is locally compact, and the associated projective space $\BP_+\CC(Z)=\Lfaktor{\CC(Z)\setminus \{0\}}{\BR_+}$ is compact. Moreover, in the compact case, the geometric intersection number between curves extends to a continuous bilinear map $i : \CC(Z)\times\CC(Z)\to\BR_+$. It will be important later on to know that this form gives us a characterisation of the measured laminations as being the currents $\mu\in \CC(Z)$ satisfying $i(\mu,\mu)=0$. We can also notice that the boundary curves are characterised by a zero intersection form with every current. As mentioned earlier, the reason why we want to work with the currents on the compact surface $\Sigma$, rather than with the currents on $S$, is that the continuity of the intersection number fails in the latter case.
	
	\begin{exam}[Discontinuity of the intersection form in the non-compact case]\label{ObstructionPreuve}
		\label{NCI}
		\begin{figure}[!ht]
			\centering
			\includegraphics[scale=0.55]{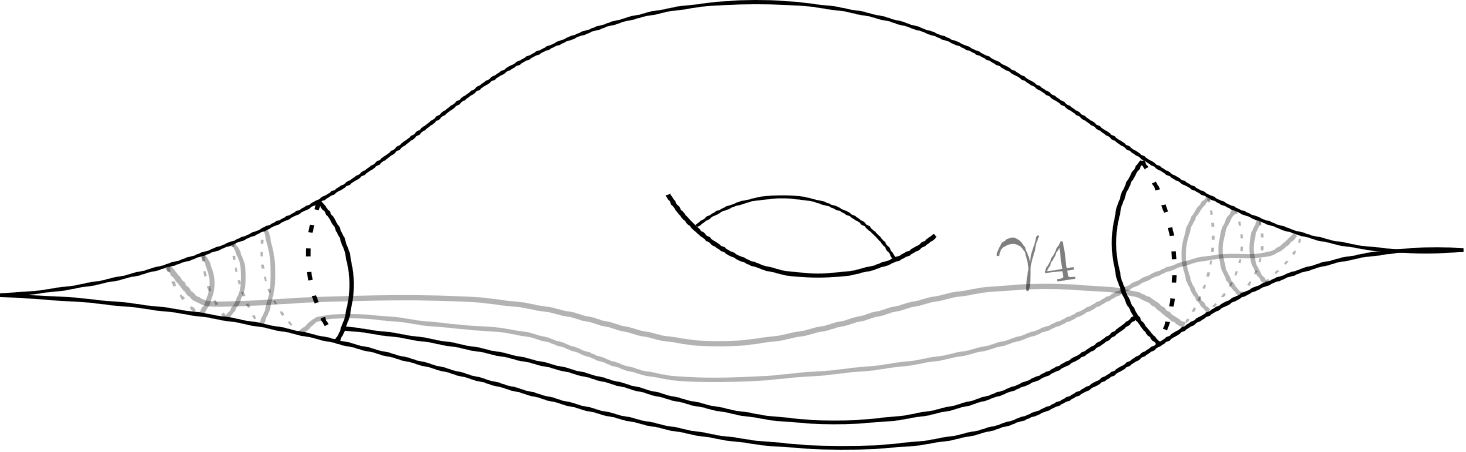}
			\caption{Obstruction to the continuity of $i$ \label{obstruction}}
		\end{figure}
		Take a hyperbolic surface with at least two cusps, fix an embedded horocycle around each of them, and a simple geodesic arc between those curves which meet them orthogonally. Note that this arc is part of a cusps-to-cusps geodesic arc $\gamma$. Consider a sequence of closed curves $(\gamma_n)_{n\in\BN}$, where $\gamma_n$ is the geodesic homotopic to the closed curve which runs the geodesic arc mentioned above, turns $n$ times around the first cusp following the fixed horocycle, goes back along the geodesic arc and turns $n$ times around the second cusp as in \cref{obstruction}. The self-intersection number of such a sequence is going to grow without bound. On the other hand, it approaches the weight 2 current associated to $\gamma$ which has $0$ self-intersection number.  
	\end{exam}	
	
	See \cite[Prop. 5.1]{DS} for a more detailed discussion on that obstruction to a continuous extension of the intersection number on the space of currents for non-compact surfaces.
	
	Example~\ref{ObstructionPreuve} shows that there is no continuous extension of the intersection number for currents on $S$ --- it is the reason why we chose to work with currents on the compact surface $\Sigma$ instead of the currents on $S=\Sigma\setminus\partial\Sigma$. This solves the problem of continuity of $i(\cdot,\cdot)$ but raises a new problem: we won't be able to consider the Liouville current anymore.
	
	\begin{prop}\label{NLC}
		Let $\Sigma$ be a compact hyperbolic surface with non-empty boundary and $X$ a hyperbolic structure on $S=\Sigma\setminus \partial\Sigma$. There is no current $L_X$ on $\Sigma$ which satisfies $i(L_X,\gamma)=\ell_X(\gamma)$ for every essential closed curve $\gamma\in\FC(\Sigma)$.
	\end{prop}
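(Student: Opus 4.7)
The plan is to argue by contradiction using the continuity of the intersection form on $\CC(\Sigma)$ together with the observation (already recalled just before the statement) that every boundary component $\delta\subset\partial\Sigma$ satisfies $i([\delta],\mu)=0$ for every $\mu\in\CC(\Sigma)$. Suppose such an $L_X\in\CC(\Sigma)$ exists. The rough idea is that $i(L_X,\cdot)$, as a continuous linear functional on $\CC(\Sigma)$, cannot ``see'' the boundary and therefore cannot record the length-contribution of a closed curve that lives deep in a cusp of $(S,X)$.

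Concretely, I fix a boundary component $\delta$, corresponding to a cusp $p$ of $(S,X)$, and pick an essential closed curve $\beta$ whose $X$-geodesic representative meets a horocyclic neighbourhood of $p$. I would then consider the sequence of essential closed curves $(\gamma_n)\subset\FC(\Sigma)$ obtained from $\beta$ by inserting $n$ horocyclic wrappings around the cusp (concretely $\gamma_n=\beta\cdot c^n$ where $c$ is the peripheral loop around $p$). Two asymptotics are needed:
\begin{itemize}
\item In $\CC(\Sigma)$, the current $\gamma_n$ has mass growing linearly in $n$ (each wrap contributes one copy of the boundary-parallel piece), and $\gamma_n/n \to c\,[\delta]$ for some $c>0$.
\item In $(S,X)$, a direct computation in the upper half-plane parametrisation of the cusp shows that the $X$-geodesic representative of $\gamma_n$ is obtained by a semicircle travelling higher and higher into the cusp, giving $\ell_X(\gamma_n)=2\log n+O(1)$.
\end{itemize}
Combining these with continuity of $i$ on $\CC(\Sigma)$ and the fact that $i(L_X,[\delta])=0$ gives the consistent relation $\ell_X(\gamma_n)/n\to 0$, so the single sequence $(\gamma_n)$ is not yet enough.

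The contradiction is then obtained by exhibiting a \emph{second} sequence $(\gamma'_n)\subset\FC(\Sigma)$ projectively converging to the same boundary current $c[\delta]\in\CC(\Sigma)$ but for which $\ell_X(\gamma'_n)/a'_n$ tends to a strictly positive value. The candidate is to take $\gamma'_n$ to combine the cusp wrapping with a ``backbone'' whose length in $X$ grows at the right rate so that the linear growth of $\ell_X(\gamma'_n)$ is forced while the interior contribution to the current is small enough to be swallowed in the normalisation $a'_n$. Continuity of $i(L_X,\cdot)$ applied to both sequences would force their renormalised lengths to coincide with the single value $i(L_X,c[\delta])=0$, which contradicts the chosen positive limit for $(\gamma'_n)$.

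The main obstacle is precisely this second construction: one must produce an essential closed curve $\gamma'_n$ whose $\CC(\Sigma)$-behaviour is still dominated by the boundary component $[\delta]$, while its $X$-length grows at the same rate as the normalising constant $a'_n$. This requires a fine quantitative understanding of how mass in $\CC(\Sigma)$ and length in $X$ are decoupled near a cusp, essentially the same mechanism already visible in Example~\ref{ObstructionPreuve} (where a curve winding many times in a cusp is invisible to the intersection form on $\Sigma$ even though it contributes substantially to geometric quantities in $S$). Exploiting that decoupling is what ultimately forbids any single current $L_X\in\CC(\Sigma)$ from reproducing the length function $\ell_X$ on all of $\FC(\Sigma)$.
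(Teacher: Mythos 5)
Your starting point mirrors the paper's: boundary components of $\Sigma$ contribute nothing to the intersection form, and curves that wrap more and more around a cusp should defeat any candidate $L_X$. The problem is in what you extract from this. From the projective convergence $\gamma_n/n\to c[\delta]$, continuity only yields $i(L_X,\gamma_n)=o(n)$, and since $\ell_X(\gamma_n)=2\log n+O(1)$ this is indeed consistent --- hence your (correct) observation that the single sequence, read this way, gives no contradiction, and your pivot to a two-sequence plan.

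That plan is not merely unfinished: the second sequence cannot exist. If $\gamma'_n/a'_n\to c[\delta]$ in $\CC(\Sigma)$, then $\gamma'_n$ is asymptotically dominated (in $\CC(\Sigma)$-mass) by wrapping around $\delta$, whose mass grows like the winding number while its $X$-length grows only logarithmically. Padding with a backbone to force linear length growth does not help: a backbone confined to a compact core has $\CC(\Sigma)$-mass comparable to its $X$-length, so making its length comparable to $a'_n$ shifts the projective limit away from $[\delta]$; sending the backbone into cusps just produces more wrapping, landing you back in the logarithmic regime. The decoupling between $\CC(\Sigma)$-mass and $\ell_X$ that you want to exploit is exactly what \emph{fails} in the direction you need, so the contradiction never materialises.

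What you missed is that the first sequence already suffices, once you replace the soft statement $i(L_X,\gamma_n)=o(n)$ with a \emph{uniform bound}, independent of $n$, on $i(\gamma_n,\nu)$ for each fixed current $\nu$. That is precisely what the paper does: taking $\gamma$ a geodesic arc between boundary components and $\gamma_k$ the geodesic that runs $\gamma$ out and back with $k$ wraps at each end, one has $i(\gamma_k,\mu)\leq 2\sharp(\gamma\cap\mu)$ for every weighted multicurve $\mu$, because the wraps lie on boundary components and so meet nothing. Embedding $\Sigma$ in the closed double $D\Sigma$, where the doubled arc $\hat\gamma$ is a closed curve and $i$ is continuous, density of weighted multicurves upgrades this to $i(\gamma_k,\nu)\leq 2i(\hat\gamma,\nu)<\infty$ for every $\nu\in\CC(\Sigma)$, uniformly in $k$. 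Since $\ell_X(\gamma_k)\to\infty$, no $L_X$ can satisfy $i(L_X,\gamma_k)=\ell_X(\gamma_k)$. Your curves $\gamma_n=\beta c^n$ would work just as well if you establish the analogous uniform bound combinatorially rather than passing through a projective limit, which forgets exactly the quantitative information you need.
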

	
	\begin{proof}
		If $\gamma$ is a closed geodesic and $\mu$ a weighted multicurve of $\Sigma$ then 
		\begin{equation} \label{intersectionCourbes}
		i(\gamma,\mu)=\min
		\left\{ \sharp(\gamma'\cap\mu) ,
		\begin{split}
		\gamma'\text{ piecewise geodesic homotopic to  } \gamma \\ \text{ in }\mu\text{-general position}
		\end{split}
		\right\},
		\end{equation}
		where a piecewise geodesic homotopic to $\gamma$ is in $\mu$-general position if the set of geodesics passing through the corners has vanishing $\mu$ measure.
		\begin{figure}[!ht]
			\centering
			\includegraphics[scale=0.35]{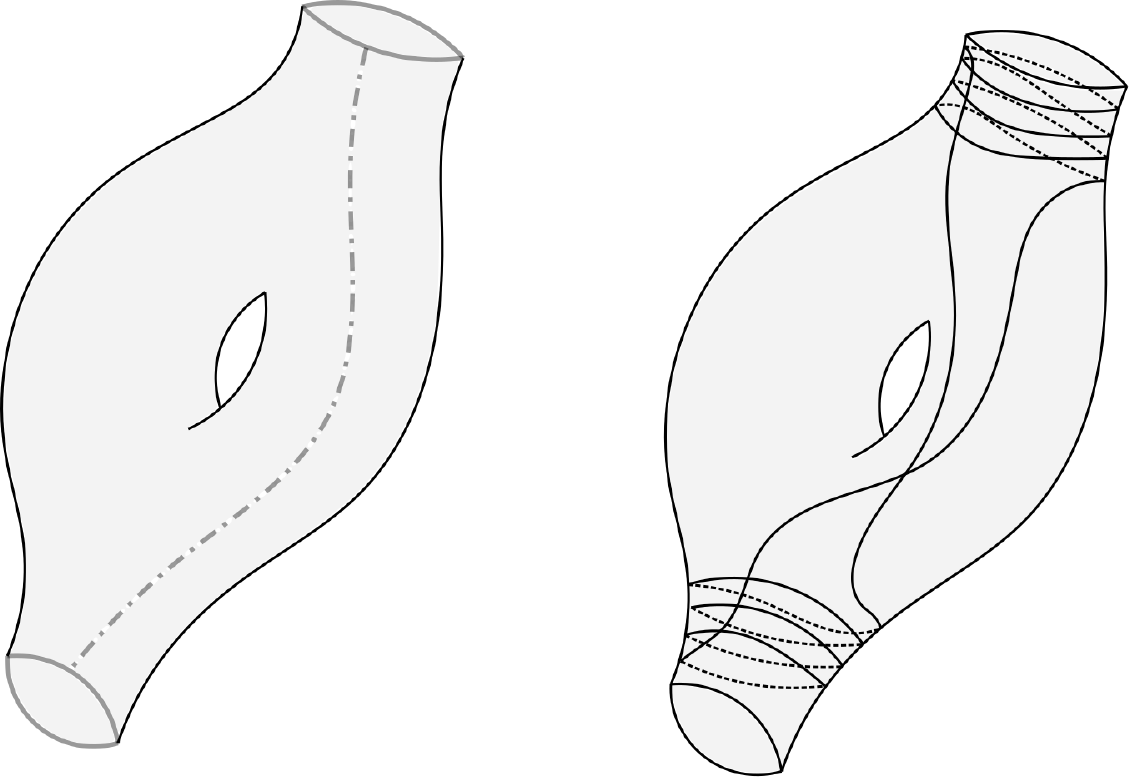}
			\caption{Obstruction to the existence of the Liouville current \label{obstructionLC}} 
		\end{figure}

		Now, consider $b_1$ and $b_2$ two boundary components of $\Sigma$, maybe the same, and $\gamma$ a non-trivial geodesic arc joining them. For every $k$, we define $\gamma_k$ as the unique closed geodesic homotopic to the piecewise geodesic which follows $\gamma$, turns $k$ times around $b_1$, follows back $\gamma$ and turns $k$ times around $b_2$. We obtain from \cref{intersectionCourbes} that for any weighted multicurve $\mu$, $$i(\gamma_k,\mu)\leq k\sharp(b_1\cap\mu)+k\sharp(b_2\cap\mu)+2\sharp(\gamma\cap\mu)=2\sharp(\gamma\cap\mu).$$

		We want to extend the previous inequality for $\mu$ a current, to do so we need a well-defined notion of intersection with $\gamma$. For this purpose we can embed $\Sigma$ into the closed doubled surface $D\Sigma$, for more details about how to pass from $\Sigma$ to $ D\Sigma$ the reader can refer to \cite{Tri}.  Hence, $\CC(\Sigma)$ is a subset of $\CC(D\Sigma)$, the double $\hat{\gamma}$ of $\gamma$ is a curve and in $\CC(D\Sigma)$ we have 
		\begin{align} \label{majoration}
		i(\gamma_k,\mu)\leq 2i(\hat{\gamma},\mu), 
		\end{align}
		for any $\mu$ weighted multicurve of $\Sigma$. Moreover, the weighted multicurves are dense in $\CC(\Sigma)$ and the intersection number is continuous in $\CC(D\Sigma)$ so \cref{majoration} induces that
		\begin{equation}\label{intersectionCourant}
		\forall \nu\in\CC(\Sigma),\quad i(\gamma_k,\nu)\leq2i(\hat{\gamma},\nu)<\infty.
		\end{equation}

		However,$\lim\limits_{k\to\infty} \ell_X(\gamma_k)=\infty$ for any hyperbolic structure $X$ on $S$, so \cref{intersectionCourant} prevents any intersection with a fixed current to produce the length.
	\end{proof}

	\subsection{Cusps neighborhoods and intersection number}
	
	Everything in the next section relies on a good understanding of the behaviour of geodesics in cusps.
	More precisely, if $X$ is a hyperbolic structure on $S$ then we denote by $H^i_{k}$ the embedded horosphere of length $1/k$ around the i-th cusp. The horosphere $H^i_{k}$ bounds the horoball $B^i_{k}$ of area $1/k$. We will refer to $H^i_{k}$ and $B^i_{k}$ as the horosphere and horoball of depth $k$. We also set $X^k$ the compact core of $X$ bounded by the horospheres $H^i_{k}$ and $\CB^k$ its complement:
	\begin{align}\label{defXB}
	X^k=X\setminus\bigcup\limits_{i}B^i_{k}, \quad \quad \CB^k=\bigcup\limits_{i}B^i_{k}.
	\end{align}
	There is a direct link  between the number of times a curve turns around a cusp and the depth it reaches \cite[Prop. 3.4]{BPT}. It follows that every curve that goes deep into a cusp has a large self-intersection number. To make this link more clear we recall a notion introduced in \cite[Def. 2.6]{ES}: the \textit{peripheral self-intersection number}.
	
	\begin{defi} \label{SelfIntNumb}
		Let $Z$ be a hyperbolic surface (compact or not) and recall that a peripheral subgroup of $\pi_1(Z)$ is nothing other than a cyclic subgroup generated by a non-essential closed curve. The \textbf{peripheral self-intersection number} $i_{per}(\gamma,\gamma)$ of $\gamma\in\FC(Z)$ is the supremum over all maximal peripheral subgroups $G \subset \pi_1(Z)$ of the maximal number of times that the image of a lift $\tilde{\gamma}$ of $\gamma$ under $\tilde{Z}\to\Lfaktor{\Tilde{Z}}{G}$ meets itself transversely.
	\end{defi}
	
	The peripheral self-intersection number is a topological invariant. It is thus independent of the metric on $S$, or more specifically, whether one considers the curves on $S$ or on $\Sigma$. Moreover, for every compact subset $K$ of $Z\setminus \partial Z$ there is a upper bound for the peripheral self-intersection number of the closed geodesics contained in $K$.  Conversely, for every $N>0$ there is a compact subset $K_N$ of $Z\setminus \partial Z$ that contains all the geodesics $\gamma$ with $i_{per}(\gamma,\gamma)\leq N$ \cite[Lem. 2.7]{ES}. In the absence of boundary, one can easily quantify this property.

	\begin{lemm}\label{ipbound}
		Let $X$ be a non-compact finite topological type surface with no boundary, and $\gamma$ be an essential closed curve on $X$, this curve has support on $X^k$ if and only if $i_{per}(\gamma,\gamma)\leq 4k$.
	\end{lemm}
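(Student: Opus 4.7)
The plan is to work one cusp at a time in the upper half plane model. Fix a cusp $c$ of $X$ together with its maximal peripheral subgroup $G\subset\pi_1(X)$. After conjugation I may place $c$ at $\infty$ in $\BH^2$ and normalize so that $G=\langle z\mapsto z+1\rangle$. Under this convention the horocycle $H^c_k$ (of length $1/k$) is the image of the horizontal line $\{y=k\}$, and the horoball $B^c_k$ (of area $1/k$) is the image of $\{y>k\}$. Hence $\gamma\subset X^k$ holds if and only if, for every cusp, every lift of $\gamma$ to $\BH^2$ reaches Euclidean height at most $k$.

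Each lift $\tilde\gamma$ is the axis of a hyperbolic element of $\pi_1(X)$, hence a Euclidean semicircle which I parametrize as $(x_0+r\cos\theta,\,r\sin\theta)$ with $\theta\in(0,\pi)$; its maximum Euclidean height equals its radius $r$. Projecting $\tilde\gamma$ to the cylinder $\BH^2/G$ via $(x,y)\mapsto(x\bmod 1,y)$, a direct computation shows that two distinct parameters give the same cylinder point precisely when $\theta_1+\theta_2=\pi$ (matching heights) and $2r\cos\theta_1\in\BZ\setminus\{0\}$ (matching $x$ modulo $1$). Enumerating the admissible integer values yields $\lfloor 2r\rfloor$ transverse self-intersection points, respectively $2r-1$ when $2r\in\BZ$, i.e.\ a count of order $2r$.

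Converting this combinatorial count into $i_{per}(\gamma,\gamma)$ requires tracking the convention that each transverse crossing contributes $2$ (one contribution for each ordered pair of branches meeting), so the chosen lift yields essentially $4r$ toward $i_{per}(\gamma,\gamma)$ through this cusp. Taking the supremum over all cusps and all lifts then converts the condition \emph{every lift has radius at most $k$} into $i_{per}(\gamma,\gamma)\le 4k$, which is precisely the claimed equivalence. The main obstacle is the explicit self-intersection count for the projected semicircle, together with keeping the factor~$2$ in the convention for $i_{per}$ under control; once those are in place the rest is routine hyperbolic geometry.
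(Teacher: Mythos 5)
Your argument is essentially the paper's: normalize the cusp so that the parabolic generator is $z\mapsto z+1$, note that containment of $\gamma$ in $X^k$ is equivalent to every lift over that cusp being a Euclidean semicircle of radius at most $k$, and count self-intersections of the projected lift in the cylinder $\BH^2/\langle z\mapsto z+1\rangle$. Your extra step of first enumerating the $\lfloor 2r\rfloor$ double points via the explicit parametrization and then doubling by an ordered-pair convention reproduces the paper's count $\sharp\{n\in\BZ\setminus\{0\}:\tilde\gamma\cap(\tilde\gamma+n)\neq\emptyset\}\leq 4k$, so the two computations agree.
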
 
	
	\begin{proof}
		If we think of the curves of $\pi_1(X)$ as deck transformations then a peripheral subgroup of $\pi_1(X)$ is a subgroup generated by a parabolic element. Let's study a given cusp $C_i$, we can assume that the correspondence between $\tilde{X}$ and $\BH^2$ is such that an associated maximal parabolic element is $z\mapsto z+1$. In that case, $H_k^i$ lifts to the horizontal line $\{\Im(z)=k\}$ and if $\gamma$ is a closed geodesic of $X$ then the number of times that the image of a lift $\tilde{\gamma}$ under $\tilde{X}\to\Lfaktor{\Tilde{X}}{<z\mapsto z+1>}$ meets itself transversely is $\sharp\{n\in\BZ\setminus\{0\} | \tilde{\gamma}\cap(\tilde{\gamma}+n)\neq\emptyset\}$. However, $\gamma$ stays in $X^k$ around $C_i$, if and only if its lifts stay below the line $\{\Im(z)=k\}$, if and only if its lifts are half circles of radius at most $k$. Such a geodesic of $\BH^2$ meets at most $4k$ translations of itself ($n=\pm 1,\pm 2 ... \pm 2k$). The same process applies for every cusps and then to every maximal parabolic subgroup and we obtain the lemma.
	\end{proof}

	\section{Construction of controled sequences of random geodesics}
	
	In this section we prove that for all non-compact hyperbolic surfaces of finite volume with no boundary there are sequences of random geodesics satisfying (\ref{Prop2Bis}). However, we will first see with Example~\ref{DivSelfInt} that in the non-compact case not all the sequences of random geodesics have this property.

	\subsection{Sequences of random geodesics}
	As we saw in Proposition~\ref{NLC}, the Liouville current does not exist anymore in our setting. However, for every (complete and finite area) hyperbolic structure $X$ on $S$ the Liouville measure on $T^1X$ still exists. Recall that the \textit{Liouville measure} $\CL_X$ is the measure on the unit tangent bundle $T^1X$, obtained by pushing forward the Haar measure on $\PSL_2(\BR)$ and normalized so that $\CL_X(T^1X)=2\pi \vol_X(S)=4\pi^2|\chi(S)|$. We are going to consider geodesics approximating the Liouville measure in the following sense.
	
	\begin{defi}A sequence $(\gamma_n)_{n\in\BN}$ of essential closed geodesics on $X$ is a \textbf{sequence of random} \textbf{geodesics} if the associated probability measures converge to $\CL_X$ with respect to the weak-$*$ topology, meaning that:
		\begin{equation*}\medint\int_{T^1X}f\dfrac{d\gamma_n}{\ell_X(\gamma_n)} \underset{n\to+\infty}{\longrightarrow} \medint\int_{T^1X}f\dfrac{d\CL_X}{4\pi^2|\chi(S)|},
		\end{equation*}
		for every $f \in C_c^0(T^1X)$ continuous and compactly supported function on $T^1X$.  \end{defi} 
	
	\begin{rema}
		We will generally use the notation $\hat{\gamma}$ for the renormalisation $\dfrac{\gamma}{\ell_X(\gamma)}$.
	\end{rema}
	
	The Birkhoff ergodic theorem, together with the ergodicity of the geodesic flow, implies the existence of such sequences of geodesics. We refer to \cite[Chap. 2]{ES} for some facts about sequences of random geodesics that we will use here.
	
	The construction of the Liouville measure ensures that for a compact subsurface $K$ of $X$ we have $\CL_X(T^1K)=2\pi\vol_X(K)$. Then, if the boundary of $K$ is smooth, the Portmanteau Theorem implies that for every sequence of random geodesics $(\gamma_n)_{n\in\BN}$ we have
	\begin{equation*}
	\dfrac{\ell_X(\gamma_n\cap K)}{\ell_X(\gamma_n)}  \underset{n\to+\infty}{\longrightarrow} \dfrac{\vol_X(K)}{2\pi|\chi(S)|}.
	\end{equation*}
	Applying this property to our compact core $X^k$  we have
	\begin{equation} \label{areaComp}
	\dfrac{\ell_X(\gamma_n\cap X^k)}{\ell_X(\gamma_n)}  \underset{n\to+\infty}{\longrightarrow} \dfrac{\vol_X(X^k)}{2\pi|\chi(S)|},
	\end{equation}
	and hence, 
	\begin{equation} \label{areaHoro}
	\dfrac{\ell_X(\gamma_n\cap \CB^k)}{\ell_X(\gamma_n)}  \underset{n\to+\infty}{\longrightarrow} \dfrac{\vol_X(\CB^k)}{2\pi|\chi(S)|}.
	\end{equation}

	What is much more surprising is that sequences of random geodesics can also be used to compute lengths. More concretely, we have
	\begin{equation}
	\label{lengthSeg} \dfrac{i(\gamma_n,I)}{\ell_X(\gamma_n)} \underset{n\to+\infty}{\longrightarrow} \frac{\ell_X(I)}{\pi^2|\chi(S)|},\\
	\end{equation}
	for every compact geodesic segment $I$ in $X$. This property is basically due to Bonahon \cite[Prop. 14]{Bon88}, we also refer the reader to \cite[Prop. 2.4]{ES} for details. A direct consequence of (\ref{lengthSeg}) is that we can use random geodesics $(\gamma_n)_{n\in\BN}$ to compute the length of any essential geodesic $\gamma\in\FC(S)$:
	\begin{equation}\label{length}
	\dfrac{i(\gamma_n,\gamma)}{\ell_X(\gamma_n)} \underset{n\to+\infty}{\longrightarrow} \frac{\ell_X(\gamma)}{\pi^2|\chi(S)|}.
	\end{equation}
	Note that in this equation the curve $\gamma$ is fixed. Meaning that a priori, equation \eqref{length} does not say anything about $i(\gamma_n,\gamma_n)$. However, for compact sets (\ref{lengthSeg}) holds uniformly. As a consequence, cutting the geodesics $\gamma_n$ into geodesic segments we have 
	\begin{equation}
	\label{compact}
	i\left(\dfrac{\gamma_n}{\ell_X(\gamma_n)},\dfrac{\gamma_{n|K}}{\ell_X(\gamma_{n|K})}\right) \underset{n\to+\infty}{\longrightarrow} \dfrac{1}{\pi^2|\chi|}.
	\end{equation}
	for $K$ any fixed compact subsurface of $X$. 
	
	All those considerations about sequences of random geodesics apply to compact surfaces, hence, if $S$ were compact, applying (\ref{compact}) to $K=S$, then we would immediatly have that every sequence of random geodesics satisfies (\ref{Prop2Bis}). However, that is not necessarily true in general.
	\begin{exam}\label{DivSelfInt}
		First, note that an excursion of length $\ell$ into some $\CB_i^k$ has between $ke^{\ell/2}-2$ and $4ke^{\ell/2}$ self-intersections. Consider now a sequence of random geodesics $(\gamma_n)_{n\in\BN}$. Add to $\gamma_n$ an excursion of length $6\log(\ell_X(\gamma_n))$ at depth $k_n\xrightarrow[n\to\infty]{}\infty$ and pull it tight into a new geodesic $\gamma_n'$. If we add the excursions in a well-chosen way (for example, gluing it at the deepest point of an excursion) then the $(\gamma_n')_{n\in\BN}$ are still random geodesics  and 
		\[ \frac{i(\gamma'_n,\gamma'_n)}{\ell_X(\gamma'_n)^2} \approx \frac{i(\gamma_n,\gamma_n)+k_n\ell_X(\gamma_n)^3}{(\ell_X(\gamma_n)+6\log(\ell_X(\gamma_n)))^2}\underset{ +\infty}{\sim} \frac{i(\gamma_n,\gamma_n)}{\ell_X(\gamma_n)^2}+k_n\ell_X(\gamma_n)\xrightarrow[n\to\infty]{}\infty.\]
		One can can also refer to the arguments in Lemma~\ref{RandomGeod} below to prove that such sequences of random geodesics exist. 
	\end{exam}
	In \cite[Cor. 11.2]{Lalley2} or \cite{Lalley1}, Lalley gives a construction of random geodesics that justifies the use of the term "random": if for all $n$ the geodesic $\gamma_n$ is randomly chosen among the geodesics of length at most $n$ then $(\gamma_n)_{n\in\BN}$ is a sequence of random geodesics with probability 1. Hence, we wonder which proportion of sequences of random geodesics satisfies (\ref{Prop2Bis}). This problem might be linked to the study of the length of cusp excursions for random geodesics, see for example \cite{Haas}, \cite{Poll} or \cite{Sull} and the references therein.

	Anyway, the above example makes clear that to obtain (\ref{Prop2Bis}) in the non-compact case we have to control the excursions of the sequences of random geodesics into cusps neighborhoods. We will do it through the cutting process described below.

	\subsection{Cutting process}  Suppose that $X$ is a fixed complete and finite area hyperbolic structure for $S$. Recall that $X^t$ denotes the compact core of $X$ bounded by the horospheres of length $1/t$ around the cusps of $S$ and that $\CB^t=X\setminus X^t$ is its complement. Given two parameters $k\in\BN$ and $0<\theta<\pi/4$, and a curve $\gamma$ we want to cut the excursions of $\gamma$ in $\CB^k$ in order to prevent $\gamma$ from leaving $X^{k/\sin(\theta)}$. To do so, we will study $\gamma$ through its lifts in the universal cover $\tilde{X}$ of $X$. We focus here on a given cusp but we apply the same construction around each cusps of $X$. For $t\geq 1$ we denote by $H_t$ the horosphere of depth $t$ around this cusp and $B_t$ the horoball it bounds. Since $X$ is a hyperbolic surface endowed with a complete hyperbolic metric, its universal cover identifies with $\BH^2$, and we can suppose that the parabolic element associated to the cusp we are interested in is $z\mapsto z+1$. With this normalization $H_t$ lifts to the horizontal line $\{\Im(z)=t\}$ and we have that if a curve enters $H_t$ with some angle $\alpha\in\left[0,\pi/2\right)$ then it reaches the horosphere $H_{k/\sin(\alpha)}$ (we measure the non-oriented angle with the normal to the horosphere). We want to cut $\gamma$ in order to replace its long excursions into $B_k$ (\textit{ie.} the ones which cross $H_{k/\sin(\theta)}$) by short ones (excursions staying between $H_{k/\sin(2\theta)}$ and $H_{k/\sin(\theta)}$). To make it explicit we make a description of the process on the universal cover.

	\begin{figure}[!ht]
		\begin{minipage}[c]{.3\linewidth}
			\centering
			\includegraphics[scale=0.2]{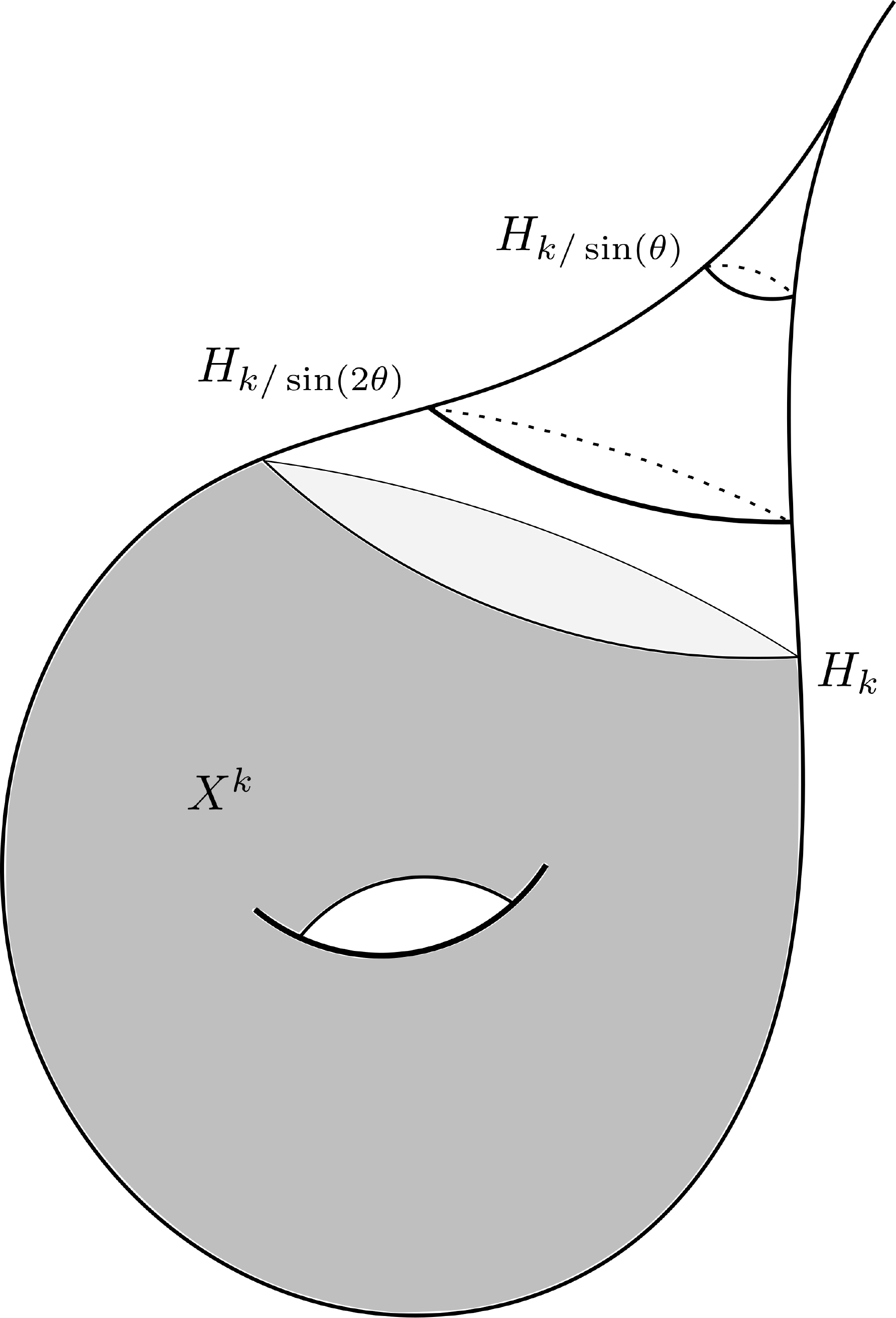}
		\end{minipage}
		\hfill%
		\begin{minipage}[c]{.6\linewidth}
			\centering
			\includegraphics[scale=0.32]{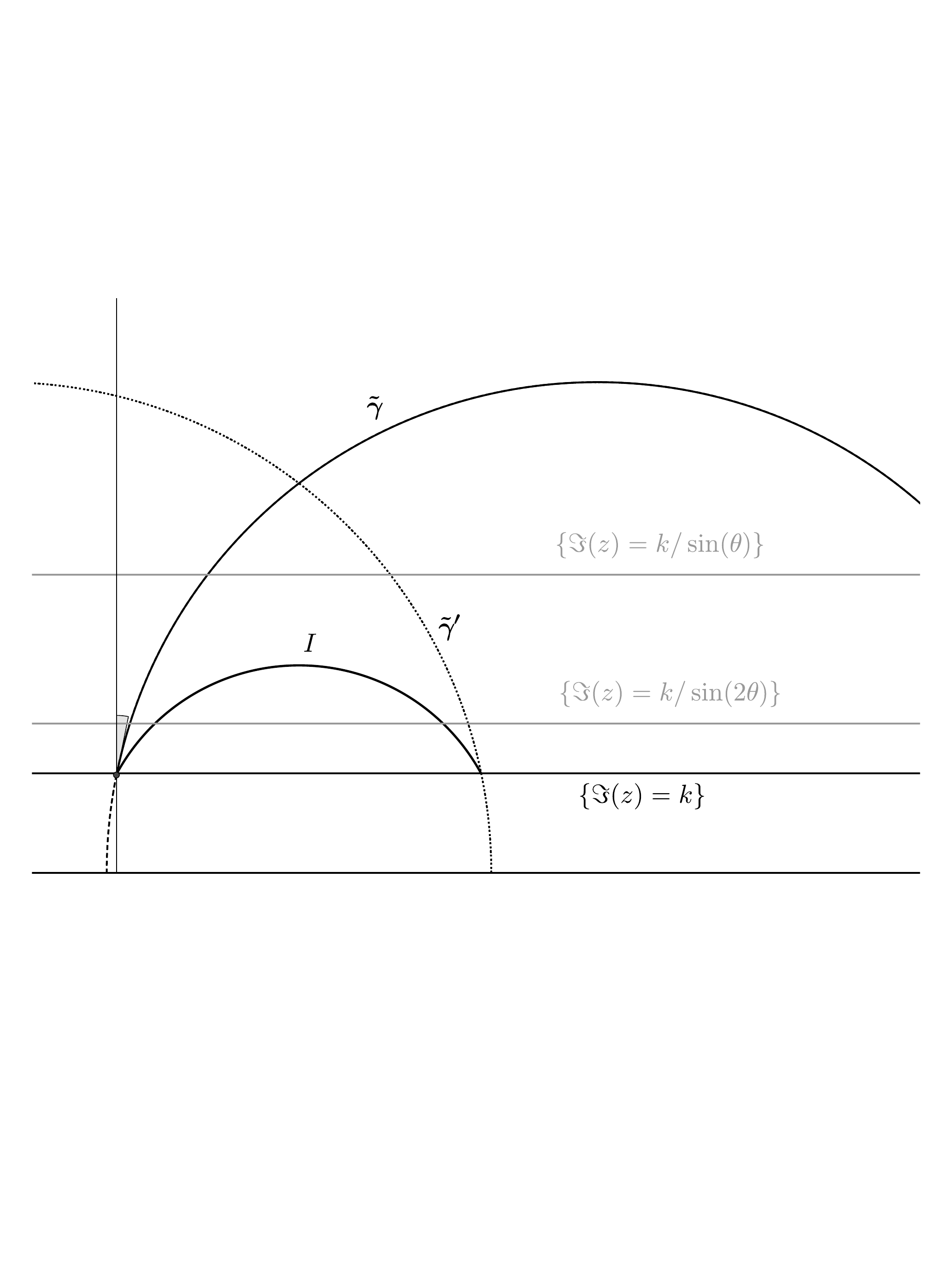}
		\end{minipage}
		\caption{Cutting process \label{Figure1}}
		
	\end{figure}
	
	If $\gamma$ makes excursions in $B_k$ we are going to modify $\gamma$ explaining the process on a fixed lift $\tilde{\gamma}$ which makes an excursion in the horoball $\{\Im(z)>k\}$ bounded by $\{\Im(z)=k\}$ but the same process applies to all lifts of $B_ k$. First, if $\tilde{\gamma}$ enters with an angle greater than $\theta$ then we don't change it. On the other hand, if it enters with an angle smaller than $\theta$ then we replace this arc by a geodesic arc $I$ which enters with angle between $\theta$ and $2\theta$ and whose exit point coincides with the exit point of a different lift $\tilde{\gamma}'$ of $\gamma$ (see \cref{Figure1}). This is always possible as long as $2k\cotan(\theta)-2k\cotan(2\theta)\geq 1$.  If we apply the same process to all the excursions of $\gamma$ around every cusp then $\gamma$ is replaced by a closed piecewise geodesic $\gamma'$. 
	
	Now, pulling $\gamma'$ tight we obtain a closed geodesic $\gamma^*$: we refer to $\gamma^*$ as the \textit{geodesic obtained by cutting process of parameters $k$ and $\theta$ from $\gamma$}. Note that if $\theta$ is small then $\gamma'$ and $\gamma^*$ have basically the same length, more precisely,  they can be mapped one to each other through a homotopy with small displacement and without disturbing to much the lengths. For the lengths, it is easy to see that there is some $e_\theta\xrightarrow[\theta\to 0]{}0$, idependent from $X$, such that for every $k\geq 1$ and $\theta$ small 
	\begin{equation} \label{ratio}
	\ell_X(\gamma')\leq(1+e_\theta)\ell_X(\gamma^*).
	\end{equation}
	Here $\ell_X(\gamma')$ refer to the arc length of $\gamma'$,  we will use again this abuse of notation but its meaning is clear from the context.

	\subsection{Construction of controled sequences of random geodesics}
	
	\begin{lemm} \label{CompLength}
		There is some $\theta_0>0$ such that if $(\gamma_n)_{n\in\BN}$ is a sequence of random geodesics on $X$ and $(\gamma_{n}^*)_{n\in\BN}$ is obtained from the $\gamma_n$ applying the cutting process of parameters $k>1$ and $\theta>\theta_0$ then there is $\mu_n\xrightarrow[n\to\infty]{}0$ such that 
		\begin{equation*}
		1\leq\dfrac{\ell_X(\gamma_n)}{\ell_X(\gamma_{n}^*)}\leq (1+\mu_n)\dfrac{\vol_X(S)}{\vol_X(X^{k})}(1+e_\theta),
		\end{equation*}
		for every $n$. Here, $e_\theta$ is as in \eqref{ratio}.
	\end{lemm}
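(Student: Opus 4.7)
The plan is to compare $\gamma_n$, the piecewise geodesic $\gamma_n'$ produced by the cutting, and its tightening $\gamma_n^*$ by splitting each curve into a compact-core part (in $X^k$) and a cusp part (in $\CB^k$). The cutting is entirely local to $\CB^k$: the $X^k$-subarcs of $\gamma_n$ are merely rewired through the cusps, so their total length is preserved, while each long excursion entering $\CB^k$ at angle $\alpha<\theta$ --- of hyperbolic length $2\log\!\bigl((1+\cos\alpha)/\sin\alpha\bigr)$ --- is replaced by one entering at angle in $[\theta,2\theta]$, which is strictly shorter since the above function is decreasing in $\alpha$. Hence
\begin{align*}
\ell_X(\gamma_n'\cap X^k)&=\ell_X(\gamma_n\cap X^k),\\
\ell_X(\gamma_n'\cap\CB^k)&\le\ell_X(\gamma_n\cap\CB^k),
\end{align*}
and in particular $\ell_X(\gamma_n')\le\ell_X(\gamma_n)$.

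The left inequality $1\le\ell_X(\gamma_n)/\ell_X(\gamma_n^*)$ is then immediate: since $\gamma_n^*$ is the geodesic representative of the free homotopy class of $\gamma_n'$, it is length-minimizing in that class, giving $\ell_X(\gamma_n^*)\le\ell_X(\gamma_n')\le\ell_X(\gamma_n)$.

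For the upper bound I would chain three ingredients. First, the compact-core preservation just established gives
\[
\ell_X(\gamma_n')\ge\ell_X(\gamma_n'\cap X^k)=\ell_X(\gamma_n\cap X^k).
\]
Second, the random-geodesic equidistribution \eqref{areaComp}, rewritten via Gauss--Bonnet $\vol_X(S)=2\pi|\chi(S)|$, produces a sequence $\mu_n\xrightarrow[n\to\infty]{}0$ with
\[
\ell_X(\gamma_n\cap X^k)\ge\frac{\vol_X(X^k)}{(1+\mu_n)\,\vol_X(S)}\,\ell_X(\gamma_n).
\]
Third, \eqref{ratio} yields $\ell_X(\gamma_n')\le(1+e_\theta)\ell_X(\gamma_n^*)$. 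Combining the three gives exactly
\[
\ell_X(\gamma_n)\le(1+\mu_n)\,\frac{\vol_X(S)}{\vol_X(X^k)}(1+e_\theta)\,\ell_X(\gamma_n^*).
\]

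The genuinely delicate point is the very first claim --- that the compact-core length is preserved even though $\gamma_n'$ need not represent the same free homotopy class as $\gamma_n$. One must check that the rematching of lifts prescribed in Section~3.2 produces a closed curve traversing the $X^k$-arcs of $\gamma_n$ with total length unchanged, and that an admissible rematching exists at all; the latter is exactly the constraint $2k(\cotan\theta-\cotan(2\theta))\ge 1$ recorded in the description of the cutting process. The threshold $\theta_0$ is chosen so that this constraint, together with the range in which \eqref{ratio} applies, holds uniformly in $k>1$. Everything else reduces to the routine bookkeeping above, combining compact-core preservation with the length-minimality of $\gamma_n^*$ and the equidistribution estimate \eqref{areaComp}.
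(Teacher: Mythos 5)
Your proof is correct and follows essentially the same route as the paper's: the lower bound comes from shortening-plus-tightening ($\ell_X(\gamma_n^*)\le\ell_X(\gamma_n')\le\ell_X(\gamma_n)$), and the upper bound from the identical three-factor chain combining compact-core preservation $\ell_X(\gamma'_{n|X^k})=\ell_X(\gamma_{n|X^k})$, the equidistribution estimate \eqref{areaComp}, and the ratio bound \eqref{ratio}. The extra geometric details you supply --- the explicit excursion-length formula $2\log\bigl((1+\cos\alpha)/\sin\alpha\bigr)$ and the admissibility constraint $2k(\cotan\theta-\cotan(2\theta))\ge 1$ --- are background facts folded into the paper's description of the cutting process rather than re-derived inside the lemma.
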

	
	\begin{proof}
		We use the same notation as in the description of the cutting process, and, as above, we denote by $\ell_X(\gamma_n')$ the arc length of the piecewise geodesics.
		
		We take $\theta_0$ small enougth such that \eqref{ratio} occurs.
		The $\gamma_n$ being random geodesics, (\ref{areaComp}) ensures that we can find a sequence $\mu_n\xrightarrow[n\to\infty]{}0$ such that 	$\frac{\ell_X(\gamma_n)}{\ell_X(\gamma_{n|X^{k}})}=(1+\mu_n)\frac{\vol_X(S)}{\vol_X(X^{k})}$. The construction of $\gamma_n'$ ensures that $\gamma_{n|X^{k}}=\gamma'_{n|X^{k}}$, thus $\frac{\ell_X(\gamma_{n|X^{k}})}{\ell_X(\gamma_n')}\leq 1$ and if $\theta>\theta_0$ then $\frac{\ell_X(\gamma_n')}{\ell_X(\gamma^*_{n})}\leq(1+e_\theta)$. The upper bound follows from those three inequalities.
		
		Now, $\gamma_n$ and $\gamma'_n$ coincide on $X^k$ but $\gamma'_n$ has shorter excursions than $\gamma_n$ in $\CB^k$, hence, $\frac{\ell_X(\gamma_n)}{\ell_X(\gamma'_n)}\geq 1$. The geodesic $\gamma_{n}^*$ is the unique geodesic representative of the free homotopy class of $\gamma'_n$ which proves that $\frac{\ell_X(\gamma'_n)}{\ell_X(\gamma_{n}^*)}\geq 1$ and the lower bound follows.
	\end{proof}

	\begin{lemm}\label{RandomGeod}
		Let $(\gamma_n)_{n\in\BN}$ be a sequence of random geodesics. If $(\gamma_{n}^*)_{n\in\BN}$ is obtained from $(\gamma_n)_{n\in\BN}$ applying the cutting processes of parameters $k_n\xrightarrow[n\to\infty]{}\infty$ and $\theta_n\xrightarrow[n\to\infty]{} 0$, then $(\gamma_{n}^*)_{n\in\BN}$ is a sequence of random geodesics.
	\end{lemm}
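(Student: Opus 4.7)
The plan is to verify the weak-$*$ convergence of $\gamma_n^*/\ell_X(\gamma_n^*)$ to $\CL_X/(4\pi^2|\chi(S)|)$ by testing against $f\in C_c^0(T^1X)$, comparing $\gamma_n^*$ to $\gamma_n$ on a fixed compact core of $X$ and invoking the hypothesis that $(\gamma_n)$ is a sequence of random geodesics. Fix such an $f$ and choose $k_0\geq 1$ with $\supp f\subset T^1 X^{k_0}$. Because $\theta_n\to 0$, the hypotheses of Lemma~\ref{CompLength} are eventually satisfied, and combined with $\vol_X(X^{k_n})\to\vol_X(S)$ (from $k_n\to\infty$), with $e_{\theta_n}\to 0$, and with $\mu_n\to 0$, it gives
\[
\frac{\ell_X(\gamma_n)}{\ell_X(\gamma_n^*)}\xrightarrow[n\to\infty]{} 1.
\]

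Once $k_n>k_0$, the cutting process only modifies $\gamma_n$ inside $\CB^{k_n}\subset\CB^{k_0}$, so by construction the piecewise geodesic $\gamma_n'$ and $\gamma_n$ induce the same measure on $T^1 X^{k_0}$. The last step, passing from $\gamma_n'$ to the geodesic representative $\gamma_n^*$, is a pull-tight whose corners all lie on the horospheres of depth between $k_n/\sin(2\theta_n)$ and $k_n/\sin(\theta_n)$, hence at depth $\geq k_n \gg k_0$, and whose displacement vanishes with $\theta_n$ by the remark preceding \eqref{ratio}. It follows that for $n$ large $\gamma_n^*\cap X^{k_0}$ is a small perturbation of $\gamma_n\cap X^{k_0}$: the discrepancy between the induced measures on $T^1 X^{k_0}$ is bounded by a constant times $e_{\theta_n}$ per modified excursion, and the number of such excursions is $O(\ell_X(\gamma_n))$, so the total defect is of size $o(\ell_X(\gamma_n))$. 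Uniform continuity of $f$ on $T^1 X^{k_0}$ then yields
\[
\int_{T^1 X} f\,d\gamma_n^* \;=\; \int_{T^1 X} f\,d\gamma_n + o(\ell_X(\gamma_n)).
\]

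Combining these estimates,
\[
\frac{1}{\ell_X(\gamma_n^*)}\int f\,d\gamma_n^* \;=\; \frac{\ell_X(\gamma_n)}{\ell_X(\gamma_n^*)}\cdot\frac{1}{\ell_X(\gamma_n)}\int f\,d\gamma_n + o(1) \;\xrightarrow[n\to\infty]{}\; \frac{1}{4\pi^2|\chi(S)|}\int f\,d\CL_X,
\]
by the random geodesic hypothesis on $(\gamma_n)$, which is exactly the required convergence. The main obstacle lies in the middle paragraph: upgrading the informal statement ``small displacement, lengths not too disturbed'' into the quantitative bound that the pull-tight contributes $o(\ell_X(\gamma_n))$ to the measure on $T^1 X^{k_0}$. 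The key inputs are that the deformation is localized near corners, which are deep in the cusps because $k_n\to\infty$, and that $\theta_n\to 0$ controls the per-corner error; once these are made precise the remainder of the argument is routine.
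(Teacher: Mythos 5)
Your argument follows the paper's proof almost verbatim: test against $f\in C_c^0(T^1X)$ supported in a compact core, note that for $n$ large $\gamma_n$ and $\gamma'_n$ agree there, use Lemma~\ref{CompLength} to get $\ell_X(\gamma_n)/\ell_X(\gamma_n^*)\to 1$, and control the pull-tight from $\gamma'_n$ to $\gamma_n^*$ via small displacement and uniform continuity of $f$. The quantitative step you flag as ``the main obstacle'' is handled in the paper precisely as you suggest, by introducing an explicit lift $\Psi_n\colon\tilde\gamma_{n|K}\to\tilde\gamma_n^*$ with displacement $\varepsilon_n\to 0$ together with a reparametrization $\varphi_n$ satisfying $1-\delta_n\le\varphi_n'\le 1+\delta_n$, and the paper's justification of these bounds is at the same level of informality as your sketch.
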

	\begin{proof}
		In this proof, we denote by $\tilde{\gamma}$ the canonical lift of a geodesic $\gamma$ to the unit tangent bundle of $X$.
		
		Let $f\in C^0_c(T^1X)$ be a continuous and compactly supported function on $T^1X$, there is $K$ a compact core of $X$ such that $\supp(f)\subset T^1K$. Since $k_n\xrightarrow[n\to\infty]{} \infty$ then there is $n_0\in\BN$ such that for all $n\geq n_0, \quad\gamma_{n|K}=\gamma'_{n|K}$.   
		The homotopy between $\gamma'_n$ and $\gamma^*_n$ induces that the arcs of $\gamma_{n|K}$ are freely homotopic to geodesic arcs of $\gamma^*_n$. Such a homotopy induces a projection from $\gamma_{n|K}$ to $\gamma^*_{n}$ and lifts to $\Psi_n : \tilde{\gamma}_{n|K} \to \tilde{\gamma}^*_{n}$, which is a homeomorphism on its image. The homotopy can be chosen to have low displacement, that is $d(p,\Psi_n(p))\leq\varepsilon_n\xrightarrow[n\to\infty]{}0$ for every $p\in\tilde{\gamma}_{n|K}$, and not to distort too much the lengths. Moreover, we can find $\varphi_n : [0,\ell_X(\gamma_{n|K})] \to \BR_+$ a piecewise smooth reparametrization of $[0,\ell_X(\gamma_{n|K})]$ such that for all $t\in[0,\ell_X(\gamma_{n|K})]$, $\Psi_n(\tilde{\gamma}_{n|K}(t))=\tilde{\gamma}^*_{n}(\varphi_n(t))$. The homotopy between $\gamma'_n$ and $\gamma^*_n$ does not distort too much the lengths, hence, we have some $\delta_n \xrightarrow[n\to\infty]{}0$ such that $1-\delta_n\leq \varphi_n'\leq 1+\delta_n$ where it is defined.

		Fix some $\mu>0$. A compactly supported continuous function is uniformly continuous, thus, there is $\varepsilon_\mu>0$ such that if $d(p,q)\leq \varepsilon_\mu$ then $|f(p)-f(q)|\leq \mu$. We can suppose that for every $n\geq n_0$, $\varepsilon_n\leq \varepsilon_\mu$. We have 
		\begin{equation*}
		\int\limits_{T^1X}fd\gamma^*_{n}= \int_0^{\ell_X(\Psi_n(\gamma_{n|K}))}f\circ\tilde{\gamma}^*_{n}(t)dt=\int_0^{\ell_X(\gamma_{n|K})}f\circ\tilde{\gamma}^*_{n}(\varphi_n(s))\varphi_n'(s)ds,
		\end{equation*}
		it follows that
		{\tiny
			\begin{align*}
			&\quad(1-\delta_n)\int_0^{\ell_X(\gamma_{n|K})}f(\Psi_n(\tilde{\gamma}_{n|K}(s))ds\leq\int\limits_{T^1X}fd\gamma^*_{n}\leq (1+\delta_n)\int_0^{\ell_X(\gamma_{n|K})}f(\Psi_n(\tilde{\gamma}_{n|K}(s))ds\\
			&\Rightarrow (1-\delta_n)\left( \int\limits_{T^1X}f d\gamma_n -\mu\ell_X(\gamma_{n|K})\right)\leq \int\limits_{T^1X}fd\gamma^*_{n} \leq(1+\delta_n)\left(\int\limits_{T^1X}f d\gamma_n +\mu\ell_X(\gamma_{n|K})\right)\\
			&\Rightarrow(1-\delta_n)\frac{\ell_X(\gamma_n)}{\ell_X(\gamma^*_{n})} \left(\int\limits_{T^1X}f d\hat{\gamma}_n  -\mu \right)\leq \int\limits_{T^1X}fd\hat{\gamma}^*_{n} \leq (1+\delta_n)\frac{\ell_X(\gamma_n)}{\ell_X(\gamma^*_{n})}\left( \int\limits_{T^1X}f d\hat{\gamma}_n +\mu\right)
			\end{align*}}
		Adapting the proof of Lemma~\ref{CompLength}  we have  $\dfrac{\ell_X(\gamma_n)}{\ell_X(\gamma^*_{n})}\xrightarrow[n\to\infty]{}1$, and passing to the limit in $n$ we obtain 
		{\small
			\begin{equation*}
			\int_{T^1X}f\dfrac{d\CL_X}{4\pi^2|\chi(S)|}-\mu\leq\underline{\lim}_n\int\limits_{T^1X}fd\hat{\gamma}^*_{n}\leq \overline{\lim}_n \int\limits_{T^1X}fd\hat{\gamma}^*_{n}\leq \int_{T^1X}f\dfrac{d\CL_X}{4\pi^2|\chi(S)|}+\mu.
			\end{equation*}}
		This is true for all $\mu$, hence, $\lim\limits_{n\to\infty} \medint\int\limits_{T^1X}fd\hat{\gamma}^*_{n} =\medint \int_{T^1X}f\dfrac{d\CL_X}{4\pi^2|\chi(S)|}$ and we have proved that $(\gamma^*_{n})_{n\in\BN}$ is a sequence of random geodesics.
	\end{proof}

	Now, for every hyperbolic structure $X$ on $S$, we will be able to build sequences $(\Curve{X}{n})_{n\in\BN}$ of random geodesics satisfying (\ref{Prop2Bis}). Moreover, we will build them in such a way that neither the converging rates in (\ref{Prop2Bis}) and (\ref{length}), nor the peripheral self-intersection numbers $i_{per}(\Curve{X}{n},\Curve{X}{n})$ depend on $X$.
	\begin{theo}\label{Lemma}
		For every complete and finite area hyperbolic structure $X$ on a finite analytic type surface of negative Euler characteristic $S$, there is a sequence $(\Curve{X}{n})_{n\in\BN}$ of random geodesics such that :
		\begin{align*}
		\lim\limits_{n\to\infty} i\left(\frac{\Curve{X}{n}}{\ell_X(\Curve{X}{n})},\frac{\Curve{X}{n}}{\ell_X(\Curve{X}{n})}\right)=\frac{1}{\pi^2|\chi(S)|}.
		\end{align*}
		More precisely, they can be chosen such that
		\begin{enumerate}
			\item $i(\hatCurve{X}{n},\hatCurve{X}{n})\leq \dfrac{1}{\pi^2\left|\chi(S)\right|}\left(1+\dfrac{1}{n}\right),\,\forall n \in\BN,$
			\item $\forall \alpha \in\FC(S),\,\exists n_\alpha\in\BN:\,\left|i(\hatCurve{X}{n},\alpha)\left(\frac{\ell_X(\alpha)}{\pi^2|\chi|}\right)^{-1}-1\right|\leq \dfrac{3}{n},\, \forall n\geq n_\alpha,$
			\item $ i_{per}(\Curve{X}{n},\Curve{X}{n})\leq C_n,\, \forall n \in\BN,$
		\end{enumerate}
		where $C_n$ and $n_\alpha$ do not depend on $X$. 
	\end{theo}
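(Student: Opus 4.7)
My plan is to start from an arbitrary sequence of random geodesics on $X$, apply the cutting process with carefully tuned parameters, and then extract a diagonal subsequence. Fix a sequence $(\gamma_m)_{m\in\BN}$ of random geodesics on $X$. Choose parameters $k_n\to\infty$ and $\theta_n\to 0$ with $e_{\theta_n}\leq 1/n$ (using \eqref{ratio}). The crucial point for uniformity in $X$ is that, by the normalization of horoballs, $\vol_X(\CB^{k_n})=n_{\mathrm{cusps}}/k_n$ and $\vol_X(S)=2\pi|\chi(S)|$, so the ratio $\vol_X(X^{k_n})/\vol_X(S)$ depends only on the topology of $S$. Applying the cutting process of parameters $k_n,\theta_n$ to each $\gamma_m$ produces a geodesic $\gamma_m^{(n)}$ which, by construction, is contained in $X^{k_n/\sin(\theta_n)}$.

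Property (3) is then immediate from Lemma~\ref{ipbound}: one can set $C_n:=4k_n/\sin(\theta_n)$, a quantity depending only on $n$. For property (2), the sequence $(\gamma_m^{(n)})_m$ is still a sequence of random geodesics for each fixed $n$ (by the argument of Lemma~\ref{RandomGeod}), so for each fixed $\alpha\in\FC(S)$, equation \eqref{length} gives $i(\gamma_m^{(n)},\alpha)/\ell_X(\gamma_m^{(n)})\to\ell_X(\alpha)/(\pi^2|\chi(S)|)$ as $m\to\infty$. The threshold $n_\alpha$ then arises purely from this fixed-curve convergence after a diagonal choice of $m=m(n)$.

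The main step is the self-intersection upper bound (1). I would apply the uniform version of \eqref{compact} to $(\gamma_m)_m$ with the compact set $K_n=X^{k_n/\sin(\theta_n)}$, which yields
\[
\frac{i(\gamma_m,\gamma_{m|K_n})}{\ell_X(\gamma_m)\ell_X(\gamma_{m|K_n})}\longrightarrow\frac{1}{\pi^2|\chi(S)|}.
\]
Now $\gamma_m^{(n)}$ differs from $\gamma_{m|X^{k_n}}$ only by short excursions confined to the shell $\CB^{k_n}\setminus\CB^{k_n/\sin(\theta_n)}$. I would split $i(\gamma_m^{(n)},\gamma_m^{(n)})$ into three kinds of contributions: (i) intersections between two arcs lying in $X^{k_n}$, bounded by $i(\gamma_m,\gamma_{m|K_n})$; (ii) intersections between a compact-core arc and a short excursion; (iii) intersections between two short excursions. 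The excursions all enter the shell with angle $\geq\theta_n$ and stay below depth $k_n/\sin(\theta_n)$, so each has length bounded by a constant $L_n$ (uniform in $X$) and carries a bounded number of self-intersections; by \eqref{areaHoro} the number of excursions is essentially $\ell_X(\gamma_m)\cdot\vol_X(\CB^{k_n})/\vol_X(S)$, which is linear in $\ell_X(\gamma_m)$ with a coefficient independent of $X$. Combining these bounds with Lemma~\ref{CompLength} (which controls $\ell_X(\gamma_m^{(n)})$ from below in terms of $\ell_X(\gamma_m)\vol_X(X^{k_n})/\vol_X(S)$) gives
\[
\frac{i(\gamma_m^{(n)},\gamma_m^{(n)})}{\ell_X(\gamma_m^{(n)})^2}\leq\frac{1}{\pi^2|\chi(S)|}\bigl(1+\epsilon_n+o_m(1)\bigr),
\]
where $\epsilon_n\to 0$ depends only on topology and the parameters $k_n,\theta_n$.

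To conclude, I would enumerate $\FC(S)=\{\alpha_1,\alpha_2,\dots\}$ and perform a diagonal extraction, choosing $m(n)$ large enough that all the convergences described above produce errors at most $1/n$ simultaneously for the first $n$ curves of the enumeration and for the self-intersection inequality. Setting $\gamma_n^{(X)}:=\gamma_{m(n)}^{(n)}$ then yields a sequence of random geodesics satisfying (1), (2) and (3) with uniform constants. The main obstacle is the uniformity in $X$ in step (1): one must ensure that the excursion contributions (ii) and (iii) truly contribute lower-order terms in $n$ that do not depend on the structure $X$, which is precisely why the topology-only dependence of $\vol_X(\CB^{k_n})/\vol_X(S)$ and the uniform geometric behavior of excursions in the shell (depth and entry angle depending only on $k_n,\theta_n$) are indispensable.
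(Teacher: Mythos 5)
Your skeleton is the paper's skeleton: start from an arbitrary sequence of random geodesics, apply the cutting process, and extract along a diagonal where both the cutting parameters and the index in the original sequence go to infinity. The observation that $\vol_X(X^{k})/\vol_X(S)$ and $\vol_X(\CB^k)$ depend only on topology is exactly the uniformity mechanism the paper exploits in \eqref{conv1} and \eqref{conv2}. Your treatment of (1) and (3) matches the paper's strategy.

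There is, however, a genuine gap in your argument for (2). You claim that for fixed $n$ the sequence $(\gamma_m^{(n)})_m$ is ``still a sequence of random geodesics by the argument of Lemma~\ref{RandomGeod}'', and then apply \eqref{length} to it. Lemma~\ref{RandomGeod} explicitly requires the cutting parameters $k_n\to\infty$, $\theta_n\to 0$; with the parameters frozen, the cut curves never enter $\CB^{k_n/\sin(2\theta_n)}$, so their normalized measures give zero mass to a region where the Liouville measure is positive, and they cannot equidistribute. Concretely, for $\alpha$ lying in $X^{k_n}$ one has $i(\gamma_m^{(n)},\alpha)=i(\gamma_m,\alpha)$, so
\[
\frac{i(\gamma_m^{(n)},\alpha)}{\ell_X(\gamma_m^{(n)})}=\frac{i(\gamma_m,\alpha)}{\ell_X(\gamma_m)}\cdot\frac{\ell_X(\gamma_m)}{\ell_X(\gamma_m^{(n)})}\xrightarrow[m\to\infty]{}\;c_n\cdot\frac{\ell_X(\alpha)}{\pi^2|\chi|},
\]
where $c_n=\lim_m \ell_X(\gamma_m)/\ell_X(\gamma_m^{(n)})>1$ is the deficit from Lemma~\ref{CompLength}. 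The limit you assert is off by this factor. The paper handles this by proving the identity $i(\hat\gamma_n^p,\alpha_q)=\frac{\ell_X(\gamma_n)}{\ell_X(\gamma_n^p)}\,i(\hat\gamma_n,\alpha_q)$ for $\alpha_q$ in the compact core (its equation~\eqref{EgualIntersection}) and then invoking Lemma~\ref{CompLength} to show $c_n\to 1$ at a rate governed by $\vol_X(X^{k_n})/\vol_X(S)$ and $e_{\theta_n}$, both topological; this is how the error $3/n$ in (2) is reached without the cut sequences being random at each fixed level.

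A second, smaller omission: you say to ``enumerate $\FC(S)$'' but not by what. To make $n_\alpha$ independent of $X$ you must enumerate by the peripheral self-intersection number $i_{per}$ (a purely topological quantity) so that, via Lemma~\ref{ipbound}, $\alpha_q\subset X^q$ for \emph{every} hyperbolic structure $X$. That is what guarantees that the diagonal threshold at which $\alpha_q$ sits inside the compact core used in the cutting process is the same for all $X$, and hence that $n_\alpha$ is metric-independent. Without specifying this enumeration, the statement ``the threshold $n_\alpha$ arises purely from this fixed-curve convergence'' does not deliver the required uniformity in $X$.
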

	
	\begin{proof}
		To obtain the desired sequence $(\Curve{X}{n})_{n\in\BN}$ we start with an arbitrary sequence of random geodesics $(\gamma_n)_{n\in\BN}$. For every $p$ we set $k_p=e^{p/2}$ and $\theta_p=e^{-p/2}$, if we apply the cutting process with parameters $k_p$ and $\theta_p$ to the sequence $(\gamma_n)_{n\in \BN}$ then we obtain a sequence $(\Tilde{\gamma}_n^p)_{n\in \BN}$ of piecewise geodesics and by pulling it tight a sequence $(\gamma_n^p)_{n\in\BN}$ of geodesics. We will chose the $(\Curve{X}{N})_{N\in\BN}$ among the $\gamma_n^p$.

		First, study the self-intersection number of those $\gamma_n^p$. As $\gamma_n^p$ is the geodesic representative of $\Tilde{\gamma}_n^p$, its self-intersection number is lower than the number of self-intersections of $\Tilde{\gamma}_n^p$. To count it, we divide $X$ into two parts, the compact core $X^{k}$ and its complement $\CB^{k}$. On  $X^{k}$, the geodesic arcs $\Tilde{\gamma}_{n|X^k}^p$ and $\gamma_{n|X^k}$ are identical so $\Tilde{\gamma}_n^p$ has $i(\gamma_{n|X^k},\gamma_n)$ self-intersections. On the complement, we count the self-intersections of $\tilde{\gamma}_n^p$ considering its different excursions in $\CB^k$: 
		\begin{align*} i(\gamma_n^p,\gamma_n^p) & \leq i(\gamma_n\cap X^k,\gamma_n) + \sum\limits_{I,J \text{ excursions in } \CB^k} i(I,J).                           
		\end{align*}
		We can distinguish two types of pairs $(I,J)$: the ones where at least one of the excursions stays in $\CB^k\cap X^{k/\sin(2\theta)}$, and the ones where both $I$ and $J$ reach $\CB^{k/\sin(2\theta)}$. In the first case, $I$ and $J$ meet at most as many times as the corresponding excursions of $\gamma_n$ and then:
		\begin{align*} i(\gamma_n^p,\gamma_n^p) & \leq i(\gamma_n\cap X^{k/\sin(2\theta)},\gamma_n) + \sum\limits_{\substack{I,J \text{ excursions in } \CB^k\\ \text{which reach } \CB^{k/\sin(2\theta)}}} i(I,J).                           
		\end{align*}
		\p 
		Moreover, an excursion of $\tilde{\gamma}_n^p$ in $\CB^k$ which reaches $\CB^{k/\sin(2\theta)}$ has a length of at least $\ln(1/\theta)$, a lower bound for the length of the geodesic arc which enters with angle $2\theta$. It follows that there is at most $\frac{\ell_X(\gamma_n\cap \CB^k)}{\ln(1/\theta)}$ such excursions. Also, the intersection number of two excursions reaching $\CB^{k/\sin(2\theta)}$ is at most  $4k/\theta$, the self-intersection number of the excursion which enters with angle $\theta$. All in all, 
		\begin{align*} i(\gamma_n^p,\gamma_n^p) &  \leq i(\gamma_n\cap X^{k/2\theta},\gamma_n) + \left( \frac{\ell_X(\gamma_n\cap \CB^k)}{\ln(1/\theta)} \right)^2 \frac{4k}{\theta}.
		\end{align*}
		\p 
		Applying equations (\ref{compact}) and (\ref{areaHoro}) we have 
		\begin{align*}
		& i(\gamma_n\cap X^{k/2\theta} ,\gamma_n)=(1+\varepsilon_n^p)\frac{\ell_X(\gamma_n)\ell_X(\gamma_n\cap X^{k/2\theta})}{\pi^2|\chi|}     \text{ and}  \\
		& \ell_X(\gamma_n\cap \CB^k)=(1+\delta_n^p)\frac{\ell_X(\gamma_n)}{2\pi|\chi|}\frac{C}{k} \quad \text{where $C$ is the number of cusps of $S$,} 
		\end{align*}
		where $\varepsilon_n^p\xrightarrow[n\to\infty]{}0$ and $\delta_n^p\xrightarrow[n\to\infty]{}0$ depend on $X$.
		As a consequence, 
		$$ i(\gamma_n^p,\gamma_n^p)\leq (1+\varepsilon_n^p)\frac{\ell_X(\gamma_n)^2}{\pi^2|\chi|}+\left((1+\delta_n^p)\frac{C\ell_X(\gamma_n)}{2\pi|\chi|\cdot k \cdot \ln(1/\theta)}\right)^2 \frac{4k}{\theta} $$
		and we obtain a upper bound for the self-intersection number of the normalized curves:
		\begin{align}
		i\left(\frac{\gamma_{n}^p}{\ell_X(\gamma_{n}^p)},\frac{\gamma_n^p}{\ell_X(\gamma_{n}^p)}\right)\leq \frac{1}{\pi^2|\chi|} \left((1+\varepsilon_n^p)+(1+\delta_n^p)^2\frac{C^2}{|\chi|}\frac{4}{p^2}  \right)\left(\frac{\ell_X(\gamma_n)}{\ell_X(\gamma_n^p)}\right)^2.\label{autoi}
		\end{align}
		
		We next study the intersection number of the $\gamma_n^p$ with closed curves. The set $\FC(S)$ is infinite and can be enumerated with $\FC(S)=\{\alpha_q|q\in\BN\}$ in such a way that $i_{per}(\alpha_q,\alpha_q)\leq 4q$ for every $q$. This enumeration is fixed whatever the structure $X$. Recall that for every $p$ we have $k=k_p=e^{p/2}$ and $\theta=\theta_p=e^{-p/2}$. Hence, when $p$ is big enough, for $q\leq p$ the curve $\alpha_q$ is included in $X^q\subset X^{k}$. However in $X^{k}$ we have $i(\gamma_n,\cdot)=i(\gamma_n^p,\cdot)$ thus 
		\begin{equation}
		i(\hat{\gamma}_n^p,\alpha_q)=\dfrac{\ell_X(\gamma_n)}{\ell_X(\gamma_n^p)}i(\hat{\gamma}_n,\alpha_q).\label{EgualIntersection}
		\end{equation}

		Now, applying Lemma~\ref{CompLength}, for every $p$ there is $\mu_n^p\xrightarrow[n\to\infty]{}0$, depending on $X$, such that
		
		\begin{equation}1\leq \frac{\ell_X(\gamma_{n})}{\ell_X(\gamma^p_{n})}\leq
		(1+\mu_n^p)
		\frac{\vol_X(S)}{\vol_X(X^{k})}(1+e_p). \label{RappLong} \end{equation}
		with $e_p=e_{e^{-p/2}}$ with the notation of \eqref{ratio}.

		Therefore, there are $m_p$ large enough such that $\varepsilon_{m_p}^p,\, \delta_{m_p}^p,\, \mu_{m_p}^p\leq\frac{1}{p}$, and $\left|\dfrac{i(\hat{\gamma}_{m_p},\alpha_q)}{\ell_X(\alpha_q)/\pi^2|\chi| }-1\right|\leq \frac{1}{p}$ for every $q\leq p$. Thus (\ref{RappLong}) and (\ref{autoi}) give us
		
		\begin{align}
		& 1\leq \frac{\ell_X(\gamma_{m_p})}{\ell_X(\gamma^p_{m_p})}\leq
		\frac{\vol_X(S)}{\vol_X(X^{k})}
		(1+\frac{1}{p})(1+e_p)\xrightarrow[p\to\infty]{}\,  1, \label{conv1}
		\end{align}
		\begin{multline}
		i(\hat{\gamma}^p_{m_p},\hat{\gamma}^p_{m_p})\leq \frac{1}{\pi^2|\chi|}\left(1+(1+\frac{1}{p})\frac{4C^2}{p^2|\chi|} \right)
		\frac{\vol_X(S)}{\vol_X(X^{k})}
		(1+\frac{1}{p})^2(1+e_p) \label{conv2}\\
		\xrightarrow[p \to \infty ]{} \frac{1}{\pi^2|\chi|}.
		\end{multline}
		\p
		The terms on the right in inequalities (\ref{conv1}) and (\ref{conv2}) do not depend on $X$ anymore so, for $N$ an integer there is $p_N$, independent from $X$ and with $p_N> p_{N-1}$, such that $1\leq\dfrac{\ell_X(\gamma_ {p_N})}{\ell_X(\gamma^{p_N}_{m_{p_N}})}\leq 1+\frac{1}{N}$  and $i(\hat{\gamma}^{p_N}_{m_{p_N}},\hat{\gamma}^{p_N}_{m_{p_N}})\leq \frac{1}{\pi^2|\chi|}(1+\frac{1}{N})$. As a consequence, we can take $\Curve{X}{N}=\gamma^{p_N}_{m_{p_N}}$. 
		
		\p The previous constructions ensure that $i(\hatCurve{X}{N},\hatCurve{X}{N})\leq \frac{1}{\pi^2|\chi|}(1+\frac{1}{N})$, and we have proved (1) in the statement of the theorem.
		
		\p Applying Proposition \ref{ipbound} we have $i_{per}(\Curve{X}{N},\Curve{X}{N})\leq 4e^{p_N}$ where $p_N$ does not depend on $X$, which gives us the third point.
		
		\p At last, (\ref{EgualIntersection}) and the choice of $p_N$ and $m_p$ induces that 
		\begin{equation*}
		1-\frac{3}{N}\leq (1-\frac{1}{N})\leq \frac{i(\hatCurve{X}{N},\alpha_q)}{\ell_X(\alpha_q)/\pi^2|\chi|}\leq(1+\frac{1}{N})^2\leq 1+\frac{3}{N},\quad \forall q\leq N,
		\end{equation*}
		hence, we obtain the second point with $n_\alpha=q$ when $\alpha=\alpha_q$.

		Moreover, up to passing to a subsequence, the $(\Curve{X}{N})_{N\in\BN}$ are built from the sequence $(\gamma_N)_{N\in\BN}$ of random geodesics through cutting processes of parameters $k_N=e^{p_N/2}\xrightarrow[N\to\infty]{}\infty$ and $\theta_N=e^{-p_N/2}\xrightarrow[N\to\infty]{}0$. As a consequence, Lemma~\ref{RandomGeod} ensures that we have built a sequence of random geodesics. At last, for $K$ a compact subsurface of $X$ we have
		\begin{equation*}
		i\left(\hatCurve{X}{n},\frac{\Curve{X}{n|K}}{\ell_X(\Curve{X}{n|K})}\right)\leq i(\hatCurve{X}{n},\hatCurve{X}{n}) \leq \dfrac{1}{\pi^2|\chi|}(1+\frac{1}{n}), 
		\end{equation*}
		and if we pass to the limit, using (\ref{compact}), we obtain that 
		$$\lim\limits_{N\to\infty}i(\hatCurve{X}{N},\hatCurve{X}{N})=\dfrac{1}{\pi^2|\chi|}.$$
	\end{proof}

	\section{Proof of Thurston's compactification}
	
	Armed with Theorem~\ref{Lemma}, we are now able to prove Thurston's compactification. As we already mentioned in the introduction, the starting point of this compactification is the embedding of $\CT(S)$ and $\BP_+\CM\CL(S)$ into the space $\BP_+(\BR_+^{\FC(S)})$:
	\begin{center} 
		$\begin{array}{ccccccccccc}
		\ell & : & \mathcal{T}(S) & \to     & \BP_+(\BR_+^{\FC(S)}) & \quad \\
		&   & X              & \mapsto & \BR_+\ell_X(\cdot),          \\
		
		\iota & : & \BP_+\CM\CL(S) & \to     & \BP_+(\BR_+^{\FC(S)}) \\
		&& \lambda      & \mapsto & \BR_+i(\lambda,\cdot).   \\
		\end{array}$
	\end{center}
	The image of $\CT(S)$ in $\BP_+(\BR_+^{\FC(S)})$ is included into a compact set (use \cref{MajorLength} for instance), thus, the closure $\overline{\CT}(S)$ of $\CT(S)$ is compact. The boundary of this set is given by the following theorem. 
	\begin{theo*}[Thurston's compactification]\label{Thurston}
		If $S$ is a finite analytic type surface with negative Euler characteristic then the accumulation points of $\CT(S)$ in $\BP_+(\BR_+^{\FC(S)})$ are the projective classes of functions $\gamma\mapsto i(\lambda,\gamma)$ where $\lambda\in\CM\CL(S)$ is a measured lamination on $S$.
	\end{theo*}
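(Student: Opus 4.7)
The plan is to follow Bonahon's strategy, with the controlled random geodesics from Theorem~\ref{Lemma} replacing the Liouville current. Compactness of $\overline{\CT}(S)$ in $\BP_+(\BR_+^{\FC(S)})$ is a standard length bound that I would take for granted, so it suffices to identify accumulation points of $\ell(\CT(S))$. The converse direction---that every $[i(\lambda,\cdot)]$ with $\lambda\in\CM\CL(S)$ actually arises as such a limit---I would handle separately via approximation of $\lambda$ by weighted simple multicurves together with twist deformations, as this is the classical part of the argument and does not require any of the new machinery.

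Fix an accumulation point $[f^\infty]$ and a sequence $X_n\in\CT(S)$ with $[\ell_{X_n}]\to [f^\infty]$ and no convergent subsequence in $\CT(S)$. For each $n$, Theorem~\ref{Lemma} furnishes a random geodesic $\gamma_n:=\Curve{X_n}{n}$ which, through $\FC(S)=\FC(\Sigma)$, I regard as a current on the compact surface $\Sigma$. By compactness of $\BP_+\CC(\Sigma)$, after passing to a subsequence there are $\varepsilon_n>0$ and a non-zero $\mu\in\CC(\Sigma)$ with $\varepsilon_n\hat{\gamma}_n\to\mu$ in $\CC(\Sigma)$.

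The key step is to exploit the $X$-uniformity in Theorem~\ref{Lemma}. By property~(2), for every $\alpha\in\FC(S)$ and every $n\geq n_\alpha$ (with $n_\alpha$ independent of the structure),
\begin{equation*}
i(\hat{\gamma}_n,\alpha)=\frac{\ell_{X_n}(\alpha)}{\pi^2|\chi(S)|}\bigl(1+O(1/n)\bigr).
\end{equation*}
Multiplying by $\varepsilon_n$ and using bilinearity and continuity of $i$ on $\CC(\Sigma)\times\CC(\Sigma)$ gives
\begin{equation*}
\varepsilon_n\ell_{X_n}(\alpha)\to \pi^2|\chi(S)|\,i(\mu,\alpha)
\end{equation*}
for every $\alpha$, so $[f^\infty]=[i(\mu,\cdot)]$ as projective classes. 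Because $X_n$ diverges in $\CT(S)$, properness of the length spectrum yields some $\alpha_0$ with $\ell_{X_n}(\alpha_0)\to\infty$, which forces $\varepsilon_n\to 0$. Combining this with property~(1), which bounds $i(\hat{\gamma}_n,\hat{\gamma}_n)$ uniformly in $n$, and continuity of $i$,
\begin{equation*}
i(\mu,\mu)=\lim_{n\to\infty}\varepsilon_n^2\,i(\hat{\gamma}_n,\hat{\gamma}_n)=0,
\end{equation*}
so $\mu$ is a measured lamination on $\Sigma$.

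To finish, decompose $\mu=\lambda+\mu_\partial$ with $\lambda$ supported in $\Sigma\setminus\partial\Sigma$ and $\mu_\partial$ supported on $\partial\Sigma$. Since boundary components have zero intersection with every current, $i(\mu,\cdot)=i(\lambda,\cdot)$ on $\FC(S)$, and the identification~(\ref{CorresLam}) places $\lambda\in\CM\CL(S)$; the non-triviality of $[f^\infty]$ forces $\lambda\neq 0$. The main obstacle I expect is the diagonal extraction $\gamma_n=\Curve{X_n}{n}$: for the continuity argument to pass through, the convergence rates in properties~(1) and~(2) must hold along a sequence in which the background structure $X$ is itself moving, and this is exactly what the $X$-independence clauses of Theorem~\ref{Lemma} secure. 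Without such uniformity, Example~\ref{DivSelfInt} shows that the diagonal self-intersection $i(\hat{\gamma}_n,\hat{\gamma}_n)$ could easily blow up, sabotaging the key identity $i(\mu,\mu)=0$.
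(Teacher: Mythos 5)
Your high-level strategy is the right one, and most of the steps are sound, but the diagonal extraction $\gamma_n=\Curve{X_n}{n}$ quietly discards the one piece of control that property~(3) of Theorem~\ref{Lemma} is designed to provide, and this opens a genuine gap. The issue is not, as you anticipated, that $i(\hat\gamma_n,\hat\gamma_n)$ could blow up --- property~(1) does rule that out along the diagonal. The problem is earlier: you infer $[f^\infty]=[i(\mu,\cdot)]$ from the pointwise limits $\varepsilon_n\ell_{X_n}(\alpha)\to\pi^2|\chi|\,i(\mu,\alpha)$, but this inference is vacuous if $i(\mu,\cdot)\equiv 0$ on $\FC(\Sigma)$, which happens exactly when $\mu$ is supported on $\partial\Sigma$. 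In that case one only learns $\varepsilon_n\ell_{X_n}(\alpha)\to 0$ for every $\alpha$, which is perfectly compatible with $[\ell_{X_n}]\to[f^\infty]\neq 0$ (the two scalings $\varepsilon_n$ and the implicit one in $[f^\infty]$ may diverge from each other). Your later remark that ``the non-triviality of $[f^\infty]$ forces $\lambda\neq 0$'' is then circular: it presupposes $[f^\infty]=[i(\lambda,\cdot)]$, which is precisely what fails when $\lambda=0$.

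To rule out $\mu$ being a boundary current you need property~(3), and this is where the diagonal hurts you. Property~(3) gives $i_{per}(\Curve{X}{n},\Curve{X}{n})\leq C_n$ uniformly in $X$, but with $C_n\to\infty$. Along a single structure $X_k$ and fixed $n$, the family $(\Curve{k}{n})_k$ therefore lies in a \emph{fixed} compact $K_n\subset\Sigma\setminus\partial\Sigma$ (via \cite[Lem.~2.7]{ES}), so the projective limit $\mu_n$ in $k$ is a nonzero lamination with compact support in the interior, whence $i(\mu_n,\beta)>0$ for a filling $\beta$ and the normalization is unambiguous. Along your diagonal the compact set is $K_n$ with $n$ ranging, so there is no fixed compact set containing all the $\gamma_n$, and nothing prevents the projective limit from accumulating on $\partial\Sigma$. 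This is exactly why the paper keeps the double sequence $\hatCurve{k}{n}$: it first takes $k\to\infty$ at each fixed $n$ to produce genuine elements $\mu_n\in\CM\CL(S)$ normalized by $i(\mu_n,\beta)=1$, and only then uses the $X$-uniform rates from property~(2) to justify interchanging the two limits and get $F=\lim_n i(\mu_n,\cdot)$, closing with the fact that $\CM\CL(S)$ is closed in $\BR_+^{\FC(S)}$. Your diagonal shortcut would need an extra argument establishing $i(\mu,\beta)>0$ directly (equivalently, that the diagonal currents do not concentrate on $\partial\Sigma$), and I do not see how to extract that from properties~(1)--(3) alone.

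Two smaller remarks: the decomposition $\mu=\lambda+\mu_\partial$ at the end is legitimate once $\mu$ is known to be a measured lamination on $\Sigma$ (spiraling leaves carry no transverse measure, so the support splits into an interior part and boundary components), but it is not needed in the paper's route, which uses property~(3) to place each $\mu_n$ inside a compact subsurface before any normalization. And you defer the converse inclusion $\iota(\BP_+\CM\CL(S))\subset\partial\overline{\CT}(S)$ to a classical approximation argument; the paper's statement is phrased so that only the inclusion you treat is required, so this is fine as scoping but worth flagging.
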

	
	Our arguments apply to the compact case, but for the sake of concreteness we will focus on non-compact surfaces.

	Let $X_k\in\CT(S)$  be a sequence  which converges in $\BP_+(\BR_+^{\FC(S)})$ and leaves all compact sets of $\CT(S)$, meaning that there are a non-zero element $F$ of $\BR_+^{\FC(S)}$ and a sequence $(\varepsilon_k)_{k\in\BN}$ of positive real numbers such that $\lim\limits_{k\to\infty} \varepsilon_k\ell_k(\cdot)=F$ pointwise (we have written $\ell_k$ for $\ell_{X_k}$). We will prove that $F$ is given by taking the intersection number with a suitable measured lamination. 
	
	Fix a filling curve $\beta$ on $S$, that is a closed curve such that the connected components of $S\setminus \beta$ are balls and annular neighborhoods of the cusps. Such a curve gives us a bound on the length of every curve $\gamma\in\FC(S)$, namely, 
	\begin{equation} \label{MajorLength}
	\ell_X(\gamma)\leq \ell_X(\beta) i(\gamma,\beta)(1+i(\gamma,\gamma))
	\end{equation}
	for every hyperbolic structure $X$ \cite[Lem. 2.1]{STH}. Since $F=\lim\limits_{k\to\infty} \varepsilon_k\ell_k(\cdot)$ is non-zero, there is $\gamma\in\FC(S)$ with $F(\gamma)\neq 0$. We obtain from (\ref{MajorLength}) that $0<F(\gamma)\leq F(\beta)(1+i(\gamma,\gamma))i(\gamma,\beta)$ and hence that $F(\beta)\neq 0$. Since we are only interested in convergence in  $\BP_+(\BR_+^{\FC(S)})$, we can assume that $F(\beta)=1$, meaning that
	\begin{align*}
	\lim\limits_{k\to\infty}\delta^k\frac{\ell_k(\cdot)}{\pi^2|\chi|}=F,
	\end{align*}
	where $\delta^k=\frac{\pi^2|\chi|}{\ell_k(\beta)}$.
	
	We will now prove that $F$ is of the form $i(\mu,\cdot)$ where $\mu$ is a measured lamination on $S$.

	Applying Theorem~\ref{Lemma} to each $X_k$, we obtain some sequences of essential closed geodesics $(\Curve{k}{n})_{n\in\BN}=(\hatCurve{X_k}{n})_{n\in\BN}$ with $\lim\limits_{n\to\infty}i(\Curve{k}{n}/\ell_k(\Curve{k}{n}),\cdot)=\ell_k(\cdot)/\pi^2|\chi|$. As all along, let $\Sigma$ be a compact complete hyperbolic surface with boundary whose interior is homeomorphic to $S$ and let's identify $\FC(S)$ with $\FC(\Sigma)$. In particular, we can consider the weighted curves $\hatCurve{k}{n}=\Curve{k}{n}/\ell_k(\Curve{k}{n})$ as currents of $\Sigma$. The space $\BP_+\CC(\Sigma)$ being compact each $(\hatCurve{k}{n})_{k\in\BN}$ projectively converges to a non-zero current $\mu_n\in\CC(\Sigma)$.

	We first want to show that the $\mu_n$ are measured laminations. Consider the sequence $(\hatCurve{k}{n})_{k\in\BN}$ for $n$ fixed, there are some $\varepsilon^k_n>0$ such that $\varepsilon^k_n\hatCurve{k}{n}$ tends to $\mu_n$ up to a subsequence in $k$. So, by diagonal extraction we can suppose that $\varepsilon^k_n\hatCurve{k}{n}\xrightarrow[k\to\infty]{}\mu_n$ for every $n$. What we have to show is that $\lim\limits_{k\to\infty}\varepsilon_n^k = 0$ for every $n$. 
	\p 
	The sequence $(X_k)_{k\in\BN}$ leaves every compact set of $\CT(S)$ so there is a simple closed curve $\alpha$ such that $\lim\limits_{k\to\infty} \ell_k(\alpha)=\infty$. Recall that to prove Theorem~\ref{Lemma} we have enumerated $\FC(S)=\{\alpha_n|n\in\BN\}$ such that $i_{per}(\alpha_n,\alpha_n)\leq 4n$, since $\alpha$ is a simple curve we can suppose that $\alpha=\alpha_1$. The $\Curve{k}{n}$ come from Theorem~\ref{Lemma} thus $\left| i(\hatCurve{k}{n},\alpha)\left(\frac{\ell_k(\alpha)}{\pi^2|\chi|}\right)^{-1}-1 \right|\leq\dfrac{3}{n}$ whatever $k$ and $n$. By hypothesis $\ell_k(\alpha)\xrightarrow[k\to\infty]{}\infty$ and we can suppose, up to a shift in $n$, that for every $n$, $\left| i(\hatCurve{k}{n},\alpha)\left(\frac{\ell_k(\alpha)}{\pi^2|\chi|}\right)^{-1}-1 \right|<\dfrac{1}{2}$. As a consequence $i(\hatCurve{k}{n},\alpha)\xrightarrow[k\to\infty]{}\infty$. However, $\infty>i(\mu_n,\alpha)
	=\lim\limits_{k\to\infty}\varepsilon^k_n i(\hatCurve{k}{n},\alpha)$ thus $\varepsilon^k_n\xrightarrow[k\to\infty]{}0$ for every $n$, and $i(\hatCurve{k}{n},\hatCurve{k}{n})$ is bounded independently from $k$ and $n$, hence, $i(\mu_n,\mu_n)=\lim\limits_{k\to\infty}(\varepsilon^k_n)^2i(\hatCurve{k}{n},\hatCurve{k}{n})=0$ and $\mu_n$ is a measured lamination on $\Sigma$. By construction, $i_{per}(\Curve{k}{n},\Curve{k}{n})\leq C_n$ for every $k$ and $n$, as mentioned earlier (or in \cite[Lem. 2.7]{ES}) it ensures that for $n$ fixed the $\Curve{k}{n}$ are all included in the same compact subsurface of $\Sigma\setminus\partial\Sigma$. It follows that $\mu_n$ is supported on a compact set of $\Sigma\setminus\partial\Sigma$ and by (\ref{CorresLam}) it is a measured lamination of $S$.
	
	Recall that $\beta$ is a filling curve of $S$, as a consequence, $i(\mu_n,\beta)\neq 0$ and hence, we can suppose that $i(\mu_n,\beta)=1$ for every $n$ and we obtain
	\begin{equation}\label{convergence}
	\lim\limits_{k\to\infty} \delta^k_n \hatCurve{k}{n} =\mu_n \text{ in } \CC(\Sigma), 
	\end{equation}
	where $\delta^k_n=\frac{1}{i(\beta,\hatCurve{k}{n})}$ is well-defined.

	To sum up, we have the following convergence diagram, where all the convergences are pointwise.
	\begin{center}
		\begin{tabular}{ccccccc}
			$\delta^1\dfrac{\ell_{X_1}(.)}{\pi^2|\chi|}$ & $\delta^2\dfrac{\ell_{X_2}(.)}{\pi^2|\chi|}$ & $\cdots$ & $\delta^k\dfrac{\ell_{X_k}(.)}{\pi^2|\chi|}$ & $\cdots$ & $\xrightarrow[\quad]{ } $ & $F\in\BR_+^{\FC(S)}$ \\
			$\big\uparrow$                & $\big\uparrow$                & $\cdots$ & $\big\uparrow$                & $\cdots$ &                           & $\big\uparrow$?      \\
			$\vdots$                      & $\vdots$                      & $\cdots$ & $\vdots$                      & $\cdots$ &                           & $\vdots$             \\
			$\delta^1_ni(\hatCurve{1}{n},\cdot)$ & $\delta^2_ni(\hatCurve{2}{n},\cdot)$  & $\cdots$ & $\delta^k_ni(\hatCurve{k}{n},\cdot)$  & $\cdots$ & $\xrightarrow{\quad}$     & $i(\mu_n,\cdot)$              \\
			$\vdots$                      & $\vdots$                      & $\cdots$ & $\vdots$                      & $\cdots$ & $\vdots$                  & $\vdots$             \\
			$\delta^1_2i(\hatCurve{1}{2},\cdot)$  & $\delta_2^2i(\hatCurve{2}{2},\cdot)$  & $\cdots$ & $\delta^k_2i(\hatCurve{k}{2},\cdot)$  & $\cdots$ & $\xrightarrow{\quad}$     & $i(\mu_2,\cdot)$              \\
			$\delta_1^1i(\hatCurve{1}{1},\cdot)$  & $\delta^2_1i(\hatCurve{2}{1},\cdot)$  & $\cdots$ & $\delta^k_1i(\Curve{k}{1},\cdot)$  & $\cdots$ & $\xrightarrow{\quad}$     & $i(\mu_1,\cdot)$              \\
		\end{tabular}
	\end{center}
	We want $F$ to be the pointwise limit of $(i(\mu_n,\cdot))_{n\in\BN}$. To prove it, it is sufficient to show that the convergence $\delta^k_ni(\hatCurve{k}{n},\gamma)\xrightarrow[n\to \infty]{ }\delta^k\frac{\ell_k(\gamma)}{\pi^2|\chi|}$ is uniform in $k$ when $\gamma\in\FC(S)$ is fixed.
	
	If $\gamma\in\FC(S)$ is fixed then Theorem~\ref{Lemma} ensures that $\left| \dfrac{\delta^k_n}{\delta^k} -1\right|\leq \epsilon_n$ and $ \left| i(\hatCurve{k}{n},\gamma)\left(\dfrac{\ell_k(\gamma)}{\pi^2|\chi|}\right)^{-1}-1 \right|\leq\epsilon_n$ for every $k$ and for $n$ large enough ($n_\gamma$ and $n_\beta$ do not depend on $k$) with $\epsilon_n \xrightarrow[n\to\infty]{}0$. Moreover, fixing $\gamma$ we know that $\delta^k\frac{\ell_k(\gamma)}{\pi^2|\chi|}\xrightarrow[k\to\infty]{}F(\gamma)$ hence the sequence $(\delta^k\frac{\ell_k(\gamma)}{\pi^2|\chi|})_{k\in\BN}$ is bounded by some $d_\gamma$ and we obtain
	\begin{equation*}
	\left| \delta^k_ni(\hatCurve{k}{n},\gamma)-\delta^k\frac{\ell_k(\gamma)}{\pi^2|\chi|}\right| \leq v_nd_\gamma\xrightarrow[n\to\infty]{}0.
	\end{equation*}
	Hence, the convergence holds uniformly in $k$, and $\lim\limits_{n\to\infty}\lim\limits_{k\to\infty} \delta^k_ni(\hatCurve{k}{n},\gamma) = \lim\limits_{k\to\infty}\lim\limits_{n\to\infty} \delta^k_ni(\hatCurve{k}{n},\gamma)$, which implies that $F(\gamma)=\lim\limits_{n\to\infty}i(\mu_n,\gamma)$. Moreover, $\CM\CL(S)$ is a closed subset of $\BR_+^{\FC(S)}$ hence $F(\cdot)=\lim\limits_{n\to\infty}i(\mu_n,\cdot)$ is of the form $F(\cdot)=i(\mu,\cdot)$ where $\mu\in\CM\CL(S)$, which was what we needed to prove. \qed
	
	\bibliographystyle{plain}
	\bibliography{Bibliography.bib}
	
\end{document}